\newtheorem{theorem}{Theorem}[section]
\newtheorem{lemma}[theorem]{Lemma}
\numberwithin{equation}{section}
\newenvironment{proof}[1][Proof]{\noindent\textit{#1.} }{\hfill \rule{0.5em}{0.5em}}
\begin{document}
\date{\today}

\title{Derivative of self-intersection local time for multidimensional fractional Brownian motion}

\author
{Qian Yu$^1$\footnote{qyumath@163.com.}
\thanks{ Q. Yu is partially supported by  National Natural Science Foundation of China (12201294) and Natural Science Foundation of Jiangsu Province, China (BK20220865).} ~and Xianye Yu$^2$\footnote{xianyeyu@gmail.com} \thanks{ X. Yu is supported by National Natural Science Foundation of China (11701589).} \\
$^1$\small{School of Mathematics, Nanjing University of Aeronautics and Astronautics, Nanjing
211106, China }\\
$^2$\small{School of Statistics and Mathematics, Zhejiang gongshang University, Hangzhou, 310018, China}
}

\maketitle

\begin{abstract}
\noindent The existence condition $H<1/d$ for first-order derivative of self-intersection local time for $d\geq3$ dimensional fractional Brownian motion can be obtained in Yu \cite{Yu2021}. In this paper, we show a limit theorem under the non-existence critical condition $H=1/d$.
\vskip.2cm \noindent {\bf Keywords:} Self-intersection local time; Fractional Brownian motion; Limit theorem.

\vskip.2cm \noindent {\it Subject Classification: Primary 60G22;
Secondary 60J55.}
\end{abstract}

\section{Introduction}
Fractional Brownian motion (fBm) on $\mathbb{R}^d$ with Hurst parameter $H\in(0,1)$ is a $d$-dimensional centered Gaussian process $B^H=\{B_t^H, ~t\geq0\}$  with component processes being independent copies of a $1$-dimensional centered Gaussian process $B^{H,i}$, $i=1,2,\cdots,d$ and the covariance function given by
$$
\mathbb{E}[B_t^{H,i}B_s^{H,i}]=\frac{1}{2}\left[t^{2H}+s^{2H}-|t-s|^{2H}
\right].
$$
Note that $B_t^{\frac12}$ is a classical standard Brownian motion.
Let $D=\{(r,s): 0<r<s<t\}$. The self-intersection local time (SLT) of fBm was first investigated in Rosen \cite{Rosen 1987} and formally defined as
$$\alpha_t(y)=\int_{D}\delta(B^H_s-B^H_r-y)drds,$$
where $B^H$ is a fBm and $\delta$ is the Dirac delta function. It was further investigated in Hu \cite{Hu 2001}, Hu and Nualart \cite{Hu 2005}.
In particular, Hu and Nualart \cite{Hu 2005} showed its existence whenever $Hd<1$. Moreover,
$\alpha_t(y)$ is H\"{o}lder continuous in time of any order strictly less than $1-H$ which can be derived from Xiao \cite{Xiao 1997}.

The derivative of self-intersection local time (DSLT) for fBm was first considered in the works by Yan et al. \cite{Yan 2008} \cite{Yan 2009}, where the ideas  were based on Rosen \cite{Rosen2005}. The DSLT for fBm has two versions. One is extended by the Tanaka formula (see in Jung and  Markowsky \cite{Jung 2014}):
$$\widetilde{\alpha}'_t(y)=-H\int_{D}\delta'(B^H_s-B^H_r-y)(s-r)^{2H-1}drds.$$

The other is from the occupation-time formula (see Jung and  Markowsky \cite{Jung 2015}):
$$
\widehat{\alpha}'_t(y)=-\int_{D}\delta'(B^H_s-B^H_r-y)drds.
$$

Motivated by the $1$st order DSLT for fBm in Jung and Markowsky \cite{Jung 2015} and the $k$-th order derivative of intersection local time (ILT) for fBm  in Guo et al. \cite{Guo 2017}, we will consider the following $k$-th order DSLT  for fBm in this paper,

\begin{align*}
\widehat{\alpha}^{(k)}_t(y)&=\frac{\partial^k}{\partial y_1^{k_1}\cdots \partial y_d^{k_d}}\int_{D}\delta(B^H_s-B^H_r-y)drds\\
&=(-1)^{|k|}\int_{D}\delta^{(k)}(B^H_s-B^H_r-y)drds,
\end{align*}
where $k=(k_1,\cdots,k_d)$ is a multi-index with all $k_i$ being nonnegative integers and $|k|=k_1+k_2+\cdots+k_d$, $\delta$ is the Dirac delta function of $d$ variables and $\delta^{(k)}(y)=\frac{\partial^k}{\partial y_1^{k_1}\cdots \partial y_d^{k_d}}\delta(y)$ is the $k$-th order partial derivative of $\delta$.

Set
$$f_\varepsilon(x)=\frac1{(2\pi\varepsilon)^{\frac d2}}e^{-\frac{|x|^2}{2\varepsilon}}=\frac1{(2\pi)^d}\int_{\mathbb{R}^d}e^{\iota\langle p,x\rangle}e^{-\varepsilon \frac{|p|^2}{2}}dp,$$
where $\langle p,x\rangle=\sum_{j=1}^dp_jx_j$ and $|p|^2=\sum_{j=1}^dp_j^2$.

Since the Dirac delta function $\delta$ can be approximated by $f_\varepsilon(x)$, we approximate $\delta^{(k)}$ and $\widehat{\alpha}_t^{(k)}(y)$ by
$$f^{(k)}_\varepsilon(x)=\frac{i^{|k|}}{(2\pi)^d}\int_{\mathbb{R}^d}p_1^{k_1}\cdots p_d^{k_d}e^{\iota\langle p,x\rangle}e^{-\varepsilon \frac{|p|^2}{2}}dp$$
and
\begin{equation}\label{sec1-eq1.2}
\widehat{\alpha}^{(k)}_{t,\varepsilon}(y)=(-1)^{|k|}\int_{D}f^{(k)}_\varepsilon(B^H_s-B^H_r-y)drds,
\end{equation}
respectively.

If $\widehat{\alpha}^{(k)}_{t,\varepsilon}(y)$ converges to a random variable in $L^p$ as $\varepsilon\to0$, we denote the limit by $\widehat{\alpha}_t^{(k)}(y)$ and call it the $k$-th DSLT of $B^H$.

Recently,  Yu \cite{Yu2021} given a existence condition of $\widehat{\alpha}_t^{(k)}(y)$.
\begin{theorem} \label{sec1-th L2}\cite{Yu2021}
For $0<H<1$ and $\widehat{\alpha}^{(k)}_{t,\varepsilon}(y)$ defined in \eqref{sec1-eq1.2}, let $\#:=\#\{k_i ~is ~odd, ~i=1, 2, \cdots d\}$ denotes the odd number of $k_i$, for $i=1, 2, \cdots, d$. If $H<\min\{\frac2{2|k|+d},\frac{1}{|k|+d-\#}, \frac1d\}$ for $|k|=\sum_{j=1}^dk_j$, then  $\widehat{\alpha}^{(k)}_{t}(0)$ exists in $L^2$.
\end{theorem}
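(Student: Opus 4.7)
The plan is to show that $\{\widehat{\alpha}^{(k)}_{t,\varepsilon}(0)\}_{\varepsilon>0}$ is Cauchy in $L^2$; by polarization this reduces to computing the kernel
$\mathbb{E}[\widehat{\alpha}^{(k)}_{t,\varepsilon}(0)\,\widehat{\alpha}^{(k)}_{t,\varepsilon'}(0)]$
and exhibiting an $\varepsilon$-uniform integrable majorant. Substituting the Fourier expression for $f_\varepsilon^{(k)}$ and applying Fubini together with the Gaussian characteristic function yields
\begin{equation*}
\frac{(-1)^{|k|}}{(2\pi)^{2d}}\int_{D\times D}\!\!\int_{\mathbb{R}^{2d}}\!\!\Bigl(\prod_{j=1}^d p_j^{k_j}(p'_j)^{k_j}\Bigr)\exp\Bigl(-\tfrac12\sum_{j=1}^d\bigl[(\mu+\varepsilon)p_j^2+(\nu+\varepsilon')(p'_j)^2+2\lambda p_jp'_j\bigr]\Bigr)\,dp\,dp'\,drdsdr'ds',
\end{equation*}
where $\mu=(s-r)^{2H}$, $\nu=(s'-r')^{2H}$, and $\lambda=\mathbb{E}[(B^{H,1}_s-B^{H,1}_r)(B^{H,1}_{s'}-B^{H,1}_{r'})]$, the latter being common to all coordinates by independence of the components.

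The inner Gaussian integral factors into $d$ identical bivariate problems. For each $j$, Isserlis' theorem evaluates the integral as $2\pi\,P_{k_j}/(\det M_\varepsilon)^{k_j+1/2}$, where $\det M_\varepsilon=(\mu+\varepsilon)(\nu+\varepsilon')-\lambda^2$ and
$$
P_{k_j}(\mu,\nu,\lambda)=\sum_{\substack{0\le c\le k_j\\c\equiv k_j\,(\mathrm{mod}\,2)}}N_{k_j,c}\,(\mu\nu)^{(k_j-c)/2}(-\lambda)^{c}.
$$
The crucial combinatorial observation is the parity constraint $c\equiv k_j\pmod 2$: when $k_j$ is odd every monomial of $P_{k_j}$ carries at least one factor of $\lambda$, whereas for even $k_j$ there is a $\lambda$-free term $(\mu\nu)^{k_j/2}$. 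Multiplying across coordinates and using $|\lambda|\le\sqrt{\mu\nu}$ to absorb the excess powers of $\lambda$ produces the master estimate
$$
\Bigl|\prod_{j=1}^d P_{k_j}(\mu,\nu,\lambda)\Bigr|\le C\,|\lambda|^{\#}\,(\mu\nu)^{(|k|-\#)/2},
$$
so the whole integrand is controlled by $|\lambda|^{\#}(\mu\nu)^{(|k|-\#)/2}/(\mu\nu-\lambda^2)^{|k|+d/2}$.

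It remains to integrate this majorant over $D\times D$ uniformly in $\varepsilon,\varepsilon'$ and pass to the limit by dominated convergence. I would decompose $D\times D$ according to the six relative orderings of $\{r,s,r',s'\}$ and, on each simplex, invoke the strong local nondeterminism of fBm to bound $\mu\nu-\lambda^2$ from below by a product of gap factors $u_i^{2H}$, where $u_1,u_2,u_3$ are the successive gaps between the four time points. After changing variables to $(u_1,u_2,u_3)$, the three integrability thresholds emerge from three distinct regimes: $H<1/d$ already at the classical SLT level, $H<2/(2|k|+d)$ from the singularity at the diagonal generated by the derivative order $|k|$, and $H<1/(|k|+d-\#)$ from the overlapping and nested configurations, where the $\#$ explicit factors of $\lambda$ cancel exactly enough of the $(\mu\nu-\lambda^2)^{-1}$ singularity to give that exponent and no better. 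The main obstacle I anticipate is producing a single determinantal lower bound sharp in all six orderings---especially the nested case $r<r'<s'<s$ where both intervals compete---so that the parity count $\#$ really controls the exponent and the three upper bounds on $H$ combine into the stated sufficient regime.
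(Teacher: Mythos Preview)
This theorem is not proved in the present paper: it is quoted verbatim from \cite{Yu2021} as a known result and serves only as background for the critical-case limit theorem (Theorem~\ref{sec1-th.d=3-2}). There is therefore no proof in the paper to compare your proposal against.

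That said, your outline is the standard route to this kind of statement and is essentially how the result is established in \cite{Yu2021} (and in the closely related works \cite{Jung 2015,Guo 2017}). The Fourier representation, the coordinate-wise factorization of the Gaussian moment, the Isserlis/Wick evaluation, and---crucially---the parity observation that each odd $k_j$ forces at least one factor of the cross-covariance $\lambda$ are exactly the ingredients that produce the exponent $|k|+d-\#$. The determinantal lower bounds you anticipate needing are precisely those recorded here as Lemma~\ref{sec3-lem3.2} (Cases~(i)--(iii)), so the ``main obstacle'' you flag is already handled by that lemma; once you feed those bounds into your majorant $|\lambda|^{\#}(\mu\nu)^{(|k|-\#)/2}(\mu\nu-\lambda^2)^{-|k|-d/2}$ on each of the three ordered regions, the three integrability thresholds drop out by elementary power counting. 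Your sketch is correct, but be aware that in this paper it is simply cited rather than reproved.
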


Note that, if $|k|=1$, the existence condition is $H<1/d$, and $Hd=1$ is the critical condition of $\widehat{\alpha}_t^{(k)}(y)$. When $d=2$, the critical case is $H=1/2$, Markowsky \cite{Mar2008} shown the limit theorem.

\begin{theorem} \label{sec1-th.d=2}\cite{Mar2008}
For $\widehat{\alpha}^{(k)}_{t,\varepsilon}(y)$  defined in \eqref{sec1-eq1.2} with $y=0$. Suppose that $H=\frac{1}{2}$, $d=2$ and $|k|=1$,
then as $\varepsilon\to0$,
$$
\Big(\log1/\varepsilon\Big)^{-1}\widehat{\alpha}^{'}_{t,\varepsilon}(0)\overset{law}{\to} N\left(0,\frac{5t}{64\pi^2\sqrt{2}}\right).
$$
\end{theorem}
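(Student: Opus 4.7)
The plan is to prove this CLT by the method of moments: with $\sigma^2 := 5t/(64\pi^2\sqrt{2})$, I aim to show that for every integer $n \geq 1$,
$$
(\log 1/\varepsilon)^{-n}\,\mathbb{E}\bigl[\widehat\alpha'_{t,\varepsilon}(0)^n\bigr] \longrightarrow \mathbb{E}[Z^n], \quad Z \sim N(0,\sigma^2),
$$
from which convergence in law follows because a centred Gaussian is determined by its moments. Since $H=1/2$ and $d=2$, $B^H$ is a planar Brownian motion; by symmetry of the two components it suffices to treat $k=(1,0)$. The mean vanishes for every $\varepsilon>0$ because, in the Fourier representation of $f^{(k)}_\varepsilon$, the integrand is odd in $p_1$ while the Gaussian characteristic function $\mathbb{E}[\exp(\iota\langle p, B^H_s - B^H_r\rangle)]$ is even in $p_1$.

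For the variance, set $a=s-r+\varepsilon$, $b=s'-r'+\varepsilon$, and let $\rho$ denote the length of $[r,s]\cap[r',s']$, which is the common covariance between the $j$-th coordinate of $B^H_s - B^H_r$ and the $j$-th coordinate of $B^H_{s'} - B^H_{r'}$. Independence of the two coordinate Brownian motions together with the explicit Gaussian characteristic function yield, after a direct four-dimensional Gaussian integration,
$$
\mathbb{E}\bigl[f^{(k)}_\varepsilon(B^H_s-B^H_r)\,f^{(k)}_\varepsilon(B^H_{s'}-B^H_{r'})\bigr] = \frac{\rho}{4\pi^2(ab-\rho^2)^2},
$$
so that $\mathrm{Var}(\widehat\alpha'_{t,\varepsilon}(0)) = (4\pi^2)^{-1}\int_{D^2} \rho(ab-\rho^2)^{-2}\,dr\,ds\,dr'\,ds'$. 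The integrand is supported on the four non-disjoint orderings of $\{r,s,r',s'\}$ (two nested and two half-overlapping). I split the integral accordingly and, in each region, pass to the natural variables: an overlap length, two outer gap lengths, and one free starting time. Integration of the free starting time produces the global factor $t$, and explicit evaluation of the remaining rational integrals shows that each of the two remaining length variables generates a logarithm, with cross-over scales running between $\varepsilon$ and $t$. Summing the contributions of all four regions gives $\mathrm{Var}(\widehat\alpha'_{t,\varepsilon}(0)) = \sigma^2 (\log 1/\varepsilon)^2 + o((\log 1/\varepsilon)^2)$.

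For the higher moments I use the analogous Fourier-Gaussian representation on $D^n$: the $n$-th moment is an integral over $D^n$ whose integrand, via Wick's theorem applied to the $2n$-dimensional Gaussian vector of Brownian increments, decomposes as a sum over pair partitions $\pi$ of $\{1,\dots,n\}$. Each perfect matching contributes, to leading order, the product over its pairs of factors of the same type appearing in the variance computation, each asymptotic to $\sigma^2(\log 1/\varepsilon)^2$; combining this with the count $(n-1)!!$ of pair partitions reproduces the Gaussian moments $(n-1)!!\,\sigma^n$ when $n$ is even, while the absence of a perfect matching yields $o(1)$ after normalisation when $n$ is odd. The main obstacle is the variance asymptotic itself: isolating the four regions of $D^2$ that produce a $(\log 1/\varepsilon)^2$ divergence and extracting the exact constant $5/(64\pi^2\sqrt{2})$ requires careful case analysis and delicate one-dimensional integrations of rational functions, with many nearly-cancelling contributions to track. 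Once this is in hand, the higher-moment step reduces to showing that Wick contractions clustering three or more intervals are sub-logarithmic in at least one factor, which follows from the same rational-function majorisations developed for the variance estimate.
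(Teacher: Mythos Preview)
The paper does \emph{not} contain a proof of this statement: Theorem~\ref{sec1-th.d=2} is quoted from Markowsky \cite{Mar2008} as background, and the paper's own contribution is Theorem~\ref{sec1-th.d=3-2} for $d\ge 3$. So there is no proof in this paper to compare yours against.

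That said, your method-of-moments outline is essentially the strategy Markowsky used in \cite{Mar2008}, and your second-moment kernel $\rho/\bigl(4\pi^{2}(ab-\rho^{2})^{2}\bigr)$ is correct. It is worth contrasting your plan with the route the present paper takes for the $d\ge 3$ analogue. There the authors use the Wiener chaos expansion of Lemma~\ref{sec3-lem-chaosdec}: they compute the full second moment (Lemma~\ref{sec3-lem3.4}), compute the second moment of the first chaos $I_{1}(f_{1,\varepsilon})$ (Lemma~\ref{sec3-lem3.5}), observe that the two limits coincide, and conclude because $I_{1}(f_{1,\varepsilon})$ is already Gaussian. This avoids the higher-moment combinatorics entirely. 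Note, however, that the paper's argument would not transfer verbatim to your $d=2$, $H=\tfrac12$ setting: for standard Brownian motion the increments over disjoint intervals are independent, so $\mu\equiv 0$ on $D_{3}$ and the term $V_{3}(\varepsilon)$---which in the paper carries the entire limit $\sigma^{2}$---vanishes identically. In your case the divergence must be extracted from the overlapping regions $D_{1}$ and $D_{2}$, which is exactly what you set out to do.

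As a proof sketch your proposal is reasonable, but two points deserve more care before it becomes a proof. First, the sentence ``Summing the contributions of all four regions gives $\sigma^{2}(\log 1/\varepsilon)^{2}+o(\cdot)$'' hides the entire computation of the constant $5/(64\pi^{2}\sqrt{2})$; this step is not short and requires identifying the correct scaling sub-regions inside each $D_{i}$. Second, in the $n$-th moment the claim that non-matching (``clustering'') contributions are sub-logarithmic is the genuinely delicate step: after applying Wick's formula to the Gaussian $p$-integral one obtains a sum over pairings whose weights are entries of the \emph{inverse} covariance matrix of the $n$ increments, not simply pairwise covariances, so the decoupling into products of variance-type factors is not immediate and needs a diagonal-dominance or block argument for the covariance matrix on the dominant region. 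Without this, the asymptotic factorisation you assert does not follow from ``the same rational-function majorisations developed for the variance estimate'' alone.
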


In this paper, we will consider the case of $\frac1H=d\geq3$ and $|k|=1$, and study a limit theorem for  $\widehat{\alpha}^{'}_{t,\varepsilon}(0)$.

\begin{theorem} \label{sec1-th.d=3-2}
For $\widehat{\alpha}^{(k)}_{t,\varepsilon}(y)$  defined in \eqref{sec1-eq1.2} with $y=0$. Suppose that $H=\frac{1}{d}$, $d\geq3$ and $|k|=1$,
then as $\varepsilon\to0$, we have
$$
\Big(\varepsilon^{-\frac1H}\log1/\varepsilon\Big)^{H-\frac12}\widehat{\alpha}^{'}_{t,\varepsilon}(0)\overset{law}{\to}N(0,\sigma^2),
$$
where
$\sigma^2=\frac{2Hd^2t^{3-4H}}{(2\pi)^d(1-2H)^2}$.
\end{theorem}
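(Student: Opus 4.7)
The plan is to prove the CLT via a first-chaos approximation. Since $f'_\varepsilon$ is odd and $B^H$ has the same distribution as $-B^H$, all odd moments of $\widehat{\alpha}'_{t,\varepsilon}(0)$ vanish and the Wiener chaos expansion takes the form $\widehat{\alpha}'_{t,\varepsilon}(0)=\sum_{n\geq 0}F_{2n+1,\varepsilon}$ with $F_{2n+1,\varepsilon}$ living in the $(2n+1)$-th chaos. The goal is to show (a) $\mathbb{E}[\widehat{\alpha}'_{t,\varepsilon}(0)^2]\sim\sigma^2(\varepsilon^{-1/H}\log(1/\varepsilon))^{1-2H}$ and (b) the first-chaos component $F_{1,\varepsilon}$ exhausts this variance to leading order. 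Since $F_{1,\varepsilon}$ is Gaussian, the normalized variable $(\varepsilon^{-1/H}\log(1/\varepsilon))^{H-1/2}F_{1,\varepsilon}$ then converges in law to $N(0,\sigma^2)$, while the remaining higher-chaos contributions vanish in $L^2$.

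\emph{Variance.} Using the Fourier representation of $f'_\varepsilon$ and Gaussian integration in the dual variables $p,q\in\mathbb{R}^d$, one obtains
\[
\mathbb{E}[\widehat{\alpha}'_{t,\varepsilon}(0)^2]=\frac{1}{(2\pi)^d}\int_{D\times D}\frac{\lambda}{(\tilde a\tilde b-\lambda^2)^{d/2+1}}\,dr\,ds\,dr'\,ds',
\]
where $\tilde a=(s-r)^{2H}+\varepsilon$, $\tilde b=(s'-r')^{2H}+\varepsilon$, and
$\lambda=\tfrac12[|s-r'|^{2H}+|r-s'|^{2H}-|s-s'|^{2H}-|r-r'|^{2H}]$. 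I would partition $D\times D$ by the ordering of $r,s,r',s'$ and perform the change of variables $u=s-r$, $v=s'-r'$, plus a gap variable; in the well-separated regime the Taylor expansion $\lambda\approx H(2H-1)uv\,g^{2H-2}$ (with $g$ the gap) is combined with the substitutions $u=\varepsilon^{1/(2H)}\tilde u$, $v=\varepsilon^{1/(2H)}\tilde v$. At the critical exponent $Hd=1$, the gap integral $\int g^{2H-2}\,dg$ with effective lower cutoff $g\sim\varepsilon^{1/(2H)}$ produces the logarithmic factor $\log(1/\varepsilon)^{1-2H}$; coincident and overlapping configurations are handled similarly and together identify the constant $\sigma^2=\tfrac{2Hd^2\,t^{3-4H}}{(2\pi)^d(1-2H)^2}$.

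\emph{Chaos decomposition.} The Wick expansion of $e^{\iota\langle p,\,B^H_s-B^H_r\rangle}$ yields the first-chaos component explicitly:
\[
F_{1,\varepsilon}=\frac{1}{(2\pi)^{d/2}}\int_D\frac{B^{H,j}_s-B^{H,j}_r}{\bigl((s-r)^{2H}+\varepsilon\bigr)^{d/2+1}}\,dr\,ds,
\]
whose variance equals $\frac{1}{(2\pi)^d}\int_{D\times D}\lambda/(\tilde a\tilde b)^{d/2+1}\,d\mu$. The algebraic identity $(\tilde a\tilde b-\lambda^2)^{-(d/2+1)}=(\tilde a\tilde b)^{-(d/2+1)}\sum_{n\geq 0}\binom{d/2+n}{n}(\lambda^2/(\tilde a\tilde b))^n$ matches the termwise chaos decomposition $\mathbb{E}[\widehat{\alpha}'_{t,\varepsilon}(0)^2]=\sum_{n\geq 0}\mathbb{E}[F_{2n+1,\varepsilon}^2]$, so that the $n$-th integral $\int_{D\times D}\lambda^{2n+1}/(\tilde a\tilde b)^{d/2+n+1}\,d\mu$ equals $\mathbb{E}[F_{2n+1,\varepsilon}^2]$ up to a combinatorial constant; the $n=0$ term reproduces the leading asymptotic just established.

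\emph{Main obstacle.} The hardest step is to prove $\mathbb{E}[F_{2n+1,\varepsilon}^2]=o((\varepsilon^{-1/H}\log(1/\varepsilon))^{1-2H})$ for every $n\geq 1$. The difficulty is that $\lambda^2/(\tilde a\tilde b)$ approaches $1$ in the near-coincidence regime $r\approx r'$, $s\approx s'$, so one cannot bound it uniformly by a small number. Instead one must partition the integration domain into sub-regions and show that each additional factor $\lambda^2/(\tilde a\tilde b)$ is compensated by at least one extra factor of either $\varepsilon^{1-2H}$ or $(\log(1/\varepsilon))^{-1}$, using the Cauchy--Schwarz bound $\lambda^2\leq\tilde a\tilde b$ together with sharper pointwise estimates of $\tilde a\tilde b-\lambda^2$ away from the diagonal. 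Once these bounds are in place, $(\varepsilon^{-1/H}\log(1/\varepsilon))^{H-1/2}\widehat{\alpha}'_{t,\varepsilon}(0)$ is asymptotically equal in $L^2$ to the Gaussian variable $(\varepsilon^{-1/H}\log(1/\varepsilon))^{H-1/2}F_{1,\varepsilon}$, whose variance tends to $\sigma^2$, giving the conclusion.
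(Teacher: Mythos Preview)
Your overall strategy matches the paper's exactly: a Wiener chaos decomposition, the observation that the first chaos is Gaussian, a variance computation showing the normalized second moment converges to $\sigma^2$, and the conclusion that the higher-chaos remainder vanishes in $L^2$. The variance analysis you sketch---partitioning $D\times D$ by the ordering of $r,s,r',s'$, rescaling the two increment lengths by $\varepsilon^{1/(2H)}$, and extracting the logarithm from the gap integral in the well-separated region $r<s<r'<s'$---is also the paper's route; the two overlapping orderings contribute only $O(\log(1/\varepsilon))$ and are therefore negligible against the dominant $(\varepsilon^{-1/H}\log(1/\varepsilon))^{1-2H}$ scale.

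Where you diverge is in your identification of the ``main obstacle.'' You propose to prove $\mathbb{E}[F_{2n+1,\varepsilon}^2]=o\bigl((\varepsilon^{-1/H}\log(1/\varepsilon))^{1-2H}\bigr)$ for each $n\ge 1$ by delicate region-by-region estimates on the ratio $\lambda^2/(\tilde a\tilde b)$. The paper bypasses this entirely. Its device is to compute, as a second and separate lemma, the \emph{first-chaos} variance
\[
\mathbb{E}\bigl[F_{1,\varepsilon}^2\bigr]=c_d\int_{D\times D}\frac{\lambda}{(\tilde a\tilde b)^{d/2+1}}\,dr\,ds\,dr'\,ds'
\]
and to show that it, too, converges after normalization to the \emph{same} constant $\sigma^2$. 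In the overlapping regions this is immediate from the pointwise inequality $(\tilde a\tilde b)^{-(d/2+1)}\le(\tilde a\tilde b-\lambda^2)^{-(d/2+1)}$ together with the bounds already obtained for the full variance; in the dominant disjoint region it is essentially the same rescaling computation as before, only simpler because the denominator factorizes. Once both the total variance and the first-chaos variance are known to converge to $\sigma^2$, orthogonality of the Wiener chaoses and nonnegativity of each $\mathbb{E}[F_{2n+1,\varepsilon}^2]$ force $\sum_{n\ge 1}\mathbb{E}[F_{2n+1,\varepsilon}^2]\to 0$ automatically---no chaos-by-chaos estimate is required. What you dismiss in one clause (``the $n=0$ term reproduces the leading asymptotic just established'') is the paper's second lemma and is the genuine substantive step; what you call the main obstacle is in fact a free consequence of it.
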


When $|k|=1$, under the condition $H>1/d$, the behavior of $\widehat{\alpha}^{'}_{t,\varepsilon}(0)$ as $\varepsilon\to0$ is also of interest. One
would expect a central limit theorem to exist, but this remains unproved. Nevertheless, we venture the following conjecture

(1) If $H=\frac2{d+2}>\frac1d$ and $d\geq3$, $(\log\frac1\varepsilon)^{\gamma_1(H)}\widehat{\alpha}^{'}_{t,\varepsilon}(0)$ converges in distribution to a normal law for some $\gamma_1(H)<0$;

(2) If $H>\frac12\geq\frac2{d+2}$ and $d\geq2$, $\varepsilon^{\gamma_2(H)}\widehat{\alpha}^{'}_{t,\varepsilon}(0)$ converges in distribution to a normal law for some $\gamma_2(H)>0$;

(3) If $\frac2{d+2}<H<\frac12$ and $d\geq3$, $\varepsilon^{\gamma_3(H)}(\log\frac1\varepsilon)^{\gamma_4(H)}\widehat{\alpha}^{'}_{t,\varepsilon}(0)$ converges in distribution to a normal law for some $\gamma_3(H)>0$ and $\gamma_4(H)<0$.

The paper has the following structure. We present some preliminary lemmas in Section 2. Section 3 is to prove the main result.
Throughout this paper, if not mentioned otherwise, the letter $C$, with or without a subscript,
denotes a generic positive finite constant and may change from line to line.

\section{Preliminaries}

Next, we present two basic lemmas, which will be used in Section 3.

The next lemma gives the bounds on the quantity of $\lambda\rho-\mu^2$, which could be obtained from the Appendix B in \cite{Jung 2014} or the Lemma 3.1 in \cite{Hu 2001}.

\begin{lemma} \label{sec3-lem3.2}
Let $$\lambda=|s-r|^{2H}, ~~\rho=|s'-r'|^{2H},$$
and
$$\mu=\frac12\Big(|s'-r|^{2H}+|s-r'|^{2H}-|s'-s|^{2H}-|r-r'|^{2H}\Big).$$

\textbf{Case (i)} Suppose that $D_1=\{(r,r',s,s')\in[0,t]^4 ~|~ r<r'<s<s'\}$, let $r'-r=a$, $s-r'=b$, $s'-s=c$. Then, there exists a constant $K_1$ such that
$$\lambda\rho-\mu^2\geq K_1\,\left((a+b)^{2H}c^{2H}+a^{2H}(b+c)^{2H}\right)$$
and
$$2\mu=(a+b+c)^{2H}+b^{2H}-a^{2H}-c^{2H}.$$

\textbf{Case (ii)} Suppose that $D_2=\{(r,r',s,s')\in[0,t]^4 ~|~ r<r'<s'<s\}$, let $r'-r=a$, $s'-r'=b$, $s-s'=c$. Then, there exists a constant $K_2$ such that
$$\lambda\rho-\mu^2\geq K_2\,b^{2H}\left(a^{2H}+c^{2H}\right)$$
and
$$2\mu=(a+b)^{2H}+(b+c)^{2H}-a^{2H}-c^{2H}.$$

\textbf{Case (iii)} Suppose that $D_3=\{(r,r',s,s')\in[0,t]^4 ~|~ r<s<r'<s'\}$, let $s-r=a$, $r'-s=b$, $s'-r'=c$. Then, there exists a constant $K_3$ such that
$$\lambda\rho-\mu^2\geq K_3(ac)^{2H}$$
and
$$2\mu=(a+b+c)^{2H}+b^{2H}-(a+b)^{2H}-(c+b)^{2H}.$$
\end{lemma}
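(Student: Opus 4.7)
I propose the method of moments. First observe that $\widehat\alpha'_{t,\varepsilon}(0)$ is centered because $f^{(e_j)}_\varepsilon$ is odd in its $j$-th coordinate and $B^H_s-B^H_r$ is a symmetric $d$-dimensional Gaussian. Set $F_\varepsilon = (\varepsilon^{-1/H}\log(1/\varepsilon))^{H-1/2}\widehat\alpha'_{t,\varepsilon}(0)$; to prove the theorem it suffices to show $E[F_\varepsilon^2]\to\sigma^2$ and, for each integer $n\geq 2$, $E[F_\varepsilon^{2n}]\to (2n-1)!!\,\sigma^{2n}$.

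Starting from the Fourier representation of $f^{(e_j)}_\varepsilon$ together with the fact that $(B^H_s-B^H_r,\,B^H_{s'}-B^H_{r'})$ decouples into $d$ independent copies of a centered bivariate Gaussian with covariance $\begin{pmatrix}\lambda & \mu\\ \mu & \rho\end{pmatrix}$, a Gaussian computation over the dual variables $(p,q)$ yields
\[
E\bigl[(\widehat\alpha'_{t,\varepsilon}(0))^2\bigr] = \frac{1}{(2\pi)^d}\int_{D\times D}\frac{\mu}{\bigl[(\lambda+\varepsilon)(\rho+\varepsilon)-\mu^2\bigr]^{(d+2)/2}}\,dr\,ds\,dr'\,ds'.
\]
I then decompose $D\times D$ (using the symmetry $(r,s)\leftrightarrow (r',s')$) into the three orderings $D_1,D_2,D_3$ of Lemma \ref{sec3-lem3.2}, change variables to $(a,b,c)$ in each, and substitute the lemma's explicit formula for $\mu$ together with its lower bound on $\lambda\rho-\mu^2$.

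I expect the leading behavior to come from Case (ii): here $\lambda\rho-\mu^2\asymp b^{2H}(a^{2H}+c^{2H})$, the numerator $\mu$ is comparable to $b^{2H}\approx\rho$, and the rescaling $a=\varepsilon^{1/(2H)}u$, $c=\varepsilon^{1/(2H)}v$ absorbs the regularizer into an $\varepsilon$-free integral in $(u,v)$. Power-counting with $Hd=1$ collapses the numerator against the denominator, and the residual $b$-integration over the regime where the Case (ii) approximations hold produces the combined factor $(\varepsilon^{-1/H}\log(1/\varepsilon))^{1-2H}$; a careful coefficient extraction yields $\sigma^2$. Case (iii) is subleading because, by the lemma's explicit formula, $\mu$ there vanishes at second order in $a,c$ (one checks $2\mu=(a+b+c)^{2H}+b^{2H}-(a+b)^{2H}-(c+b)^{2H}$ is $O(ac\,b^{2H-2})$), while Case (i) contributes at the same order as Case (ii) and is absorbed into the final constant.

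For $E[F_\varepsilon^{2n}]$, the same Gaussian manipulation expresses the moment as an integral over $D^{2n}$ whose kernel is a Wick sum over pairings of $2n$ increments. The leading contribution is from pairings that decompose into $n$ disjoint Case (ii) blocks, which factorize into $(2n-1)!!\,\sigma^{2n}$ after normalization; non-factorizing pairings are estimated by iterating Lemma \ref{sec3-lem3.2} across the multi-pair interactions and shown to be of strictly smaller order. The main obstacle is the precise asymptotic in Step~2: since $\mu$ appears in the numerator with leading order $\mu\approx b^{2H}\approx\rho$, which \emph{nearly cancels} the degeneracy of the denominator, the lower bounds of Lemma \ref{sec3-lem3.2} alone are insufficient---a genuine two-term expansion of $\mu$ is required, together with careful matching of the $(a,c)$-scaling to the $\varepsilon$-regularizer, to pin down both the rate and the explicit constant $\sigma^2 = 2Hd^2 t^{3-4H}/[(2\pi)^d(1-2H)^2]$. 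A secondary difficulty is the combinatorial control of non-factorizing pairings in the higher-moment step.
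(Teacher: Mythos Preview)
Your proposal does not address the stated lemma. Lemma~\ref{sec3-lem3.2} is a purely deterministic covariance estimate: in each of the three orderings of $(r,r',s,s')$ it records the explicit expression for $2\mu$ (a one-line substitution) and a lower bound on the determinant $\lambda\rho-\mu^2$. The paper does not prove the lemma itself; it cites Appendix~B of Jung--Markowsky~\cite{Jung 2014} and Lemma~3.1 of Hu~\cite{Hu 2001}, where the determinant bounds are obtained via local nondeterminism of fBm. What you have written is instead a sketch of a method-of-moments proof of Theorem~\ref{sec1-th.d=3-2}, \emph{using} Lemma~\ref{sec3-lem3.2} as an input.

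Even read as an approach to Theorem~\ref{sec1-th.d=3-2}, your proposal contains a substantive error and a methodological divergence. The error: you assign the leading contribution to Case~(ii) and dismiss Case~(iii) as subleading, but the paper establishes the reverse. It shows $\big(\varepsilon^{-1/H}\log(1/\varepsilon)\big)^{2H-1}V_i(\varepsilon)\to 0$ for $i=1,2$ (Cases~(i) and~(ii)), while the entire constant $\sigma^2$ comes from $V_3$ (Case~(iii), disjoint intervals $r<s<r'<s'$). The very fact you cite---that $\mu=O(ac\,b^{2H-2})$ in Case~(iii)---is what makes the denominator nearly factorize as $(\varepsilon+a^{2H})(\varepsilon+c^{2H})$ and enables the rescaling $a,c\mapsto\varepsilon^{1/(2H)}a,\varepsilon^{1/(2H)}c$ that produces the rate $(\varepsilon^{-1/H}\log(1/\varepsilon))^{1-2H}$; the $\log$ factor arises from the $b$-integral $\int_{(\log 1/\varepsilon)^{-1}}^t b^{2H-2}\,db$. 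The methodological divergence: the paper does not use moments. It takes the Wiener chaos expansion $\widehat\alpha'_{t,\varepsilon}(0)=\sum_{q\geq 1}I_{2q-1}(f_{2q-1,\varepsilon})$, shows (Lemmas~\ref{sec3-lem3.4} and~\ref{sec3-lem3.5}) that the normalized first chaos $I_1(f_{1,\varepsilon})$ already has asymptotic variance $\sigma^2$ equal to that of the full sum, hence the tail $\sum_{q\geq 2}$ is $o(1)$ in $L^2$, and concludes directly since $I_1$ is Gaussian---bypassing higher-moment combinatorics entirely.
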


\bigskip

\begin{lemma} \label{sec3-lem-chaosdec}
Let $\widehat{\alpha}^{(k)}_{t,\varepsilon}(y)$ be defined in \eqref{sec1-eq1.2}, then we have the Wiener chaos expansion for $|k|=1$,
\begin{align*}
\widehat{\alpha}^{(k)}_{t,\varepsilon}(0)=\sum_{q=1}^{+\infty}I_{2q-1}(f_{2q-1,\varepsilon}).
\end{align*}

(i)If $d=2$, $f_{2q-1,\varepsilon}\in (\mathcal{H}^2)^{\otimes(2q-1)}$
\begin{align*}
f_{2q-1,\varepsilon}=\beta_q\int_{0<r<s<t}(|s-r|^{2H}+\varepsilon)^{-q-1}\prod_{j=1}^{2q-1}\mathds{1}_{[r,s]}(x_j)drds,
\end{align*}
where
$\beta_q=\frac{(-1)^q}{(2q-1)!\pi}\frac{(2q_1)!(2q_2)!}{(q_1)!(q_2)!2^q}$ and $q=q_1+q_2\geq1$.

(ii)If $d\geq3$, $f_{2q-1,\varepsilon}\in (\mathcal{H}^d)^{\otimes(2q-1)}$
\begin{align*}
f_{2q-1,\varepsilon}=\beta_{q,d}\int_{0<r<s<t}(|s-r|^{2H}+\varepsilon)^{-q-d/2}\prod_{j=1}^{2q-1}\mathds{1}_{[r,s]}(x_j)drds,
\end{align*}
where
$\beta_{q,d}=\frac{(-1)^qd}{(2q-1)!(2\pi)^{d/2}}\frac{(2q_1)!\cdots(2q_d)!}{(q_1)!\cdots(q_d)!2^q}$ and $q=q_1+\cdots q_d\geq1$.
\end{lemma}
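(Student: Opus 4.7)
The plan is to derive the chaos expansion by unfolding $f_\varepsilon^{(k)}$ through its Fourier representation, then extracting the Wiener chaos via the component-wise Stroock expansion of the Gaussian exponential, and finally pinning down which orders survive by a parity analysis of the resulting $p$-integral.

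\textbf{Step 1 (Fourier unfolding).} Insert the Fourier representation of $f_\varepsilon^{(k)}$ into \eqref{sec1-eq1.2} to get
$$\widehat{\alpha}^{(k)}_{t,\varepsilon}(0)=\frac{(-1)^{|k|}\iota^{|k|}}{(2\pi)^d}\int_D\int_{\mathbb{R}^d}p^k\,e^{\iota\langle p,B^H_s-B^H_r\rangle}\,e^{-\varepsilon|p|^2/2}\,dp\,dr\,ds,$$
where $p^k=p_1^{k_1}\cdots p_d^{k_d}$.

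\textbf{Step 2 (Component-wise chaos expansion).} Use independence of the coordinate processes to factor $e^{\iota\langle p,B^H_s-B^H_r\rangle}=\prod_{j=1}^{d}e^{\iota p_j(B^{H,j}_s-B^{H,j}_r)}$, and apply the scalar stochastic-exponential identity in each factor,
$$e^{\iota p_j(B^{H,j}_s-B^{H,j}_r)}=e^{-p_j^2|s-r|^{2H}/2}\sum_{n_j\geq 0}\frac{(\iota p_j)^{n_j}}{n_j!}\,I^{(j)}_{n_j}\!\bigl(\mathds{1}_{[r,s]}^{\otimes n_j}\bigr),$$
with $I^{(j)}_{n_j}$ the $n_j$-th multiple integral with respect to $B^{H,j}$. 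Multiplying over $j$ assembles, for each multi-index $n=(n_1,\ldots,n_d)\in\mathbb{N}^d$, a chaos element of order $|n|$ in $(\mathcal{H}^d)^{\otimes|n|}$ whose kernel is the tensor $\bigotimes_{j=1}^{d}(\mathds{1}_{[r,s]}e_j)^{\otimes n_j}$.

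\textbf{Step 3 (Parity reduction to odd chaoses).} Substitute this expansion into Step 1 and swap the summation over $n$ with the $dp\,dr\,ds$ integrations; the interchange is justified by an $L^2$-truncation argument using orthogonality of chaoses of different orders. The inner integral factorizes into $d$ one-dimensional Gaussian moments $\int_{\mathbb{R}}p_j^{n_j+k_j}e^{-p_j^2(|s-r|^{2H}+\varepsilon)/2}dp_j$, each of which vanishes unless $n_j+k_j$ is even. For $|k|=1$, say $k=e_\ell$, this forces $n_\ell$ odd and every $n_j$ ($j\neq\ell$) even, so we parametrize $n_\ell=2q_\ell-1$ with $q_\ell\geq 1$ and $n_j=2q_j$ with $q_j\geq 0$ for $j\neq\ell$. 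Then $|n|=2q-1$ with $q=q_1+\cdots+q_d\geq 1$, which is exactly why only the chaoses of odd order $2q-1$ appear in the final expansion.

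\textbf{Step 4 (Constants and assembly).} Evaluate the surviving moments via $\int_{\mathbb{R}}p^{2m}e^{-p^2\sigma^2/2}dp=\sqrt{2\pi}\,\frac{(2m)!}{m!\,2^m}\sigma^{-(2m+1)}$ with $\sigma^2=|s-r|^{2H}+\varepsilon$; the product over $j$ produces the prefactor $(2\pi)^{d/2}\prod_j\frac{(2q_j)!}{q_j!\,2^{q_j}}(|s-r|^{2H}+\varepsilon)^{-(q+d/2)}$. The powers of $\iota$ collapse via $(-1)^{|k|}\iota^{|k|+|n|}=(-1)^{|k|}\iota^{2q}=(-1)^{q+|k|}$. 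Regrouping the resulting factors, and using the combinatorial identity $\prod_j n_j!=\tfrac{1}{2q_\ell}\prod_j(2q_j)!$ together with the symmetrization needed to present the kernel as the symmetric element $\prod_{j=1}^{2q-1}\mathds{1}_{[r,s]}(x_j)$ in $(\mathcal{H}^d)^{\otimes(2q-1)}$, recovers the displayed constants $\beta_q$ for $d=2$ and $\beta_{q,d}$ for $d\geq 3$, and the time-integrand $(|s-r|^{2H}+\varepsilon)^{-q-d/2}$.

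\textbf{Main obstacle.} The delicate point is the bookkeeping in Step 4: converting the naturally arising, ordered, per-component kernel $\bigotimes_j(\mathds{1}_{[r,s]}e_j)^{\otimes n_j}$ into the symmetric scalar-looking form $\prod_{j=1}^{2q-1}\mathds{1}_{[r,s]}(x_j)$ stated in the lemma requires careful handling of the multinomial coefficient $\binom{2q-1}{2q_1,\ldots,2q_\ell-1,\ldots,2q_d}$ and of the normalization that symmetrization introduces in $I_{2q-1}$, which is how the apparently ``missing'' factor $d$ in $\beta_{q,d}$ emerges from summing the contributions over the $d$ possible positions of the single odd component index.
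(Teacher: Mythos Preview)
Your proposal is correct and leads to the same result, but the organizing principle differs slightly from the paper's. The paper obtains the kernels via the Stroock formula, writing
\[
f_{n,\varepsilon}=\frac{1}{n!}\int_D\mathbb{E}\bigl[D^{i_1,\dots,i_n}_{x_1,\dots,x_n}f^{(k)}_\varepsilon(B^H_s-B^H_r)\bigr]\,dr\,ds
=\frac{1}{n!}\int_D\mathbb{E}\bigl[\partial^{i_1}\cdots\partial^{i_n}f^{(k)}_\varepsilon(B^H_s-B^H_r)\bigr]\prod_j\mathds{1}_{[r,s]}(x_j)\,dr\,ds,
\]
and then evaluates the expectation by passing to Fourier variables and reducing to the Gaussian moment $\mathbb{E}[X_1X_{i_1}\cdots X_{i_n}]$; the parity of the $(i_1,\dots,i_n)$ forces $n=2q-1$ and produces the factorial constants. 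You instead expand $e^{\iota p_j(B^{H,j}_s-B^{H,j}_r)}$ directly into multiple integrals and then integrate in $p$, obtaining the very same Gaussian moments and the same parity constraint. The two routes are dual: the paper computes $\mathbb{E}[D^nF]$ and then uses Gaussian characteristic functions, whereas you first expand the characteristic exponential in chaoses and then integrate out $p$; either way the computation collapses to $\int p^{2m}e^{-p^2\sigma^2/2}dp$. Your route is slightly more elementary in that it avoids invoking the Malliavin derivative explicitly. Regarding your ``main obstacle'': the factor $d$ in $\beta_{q,d}$ indeed arises in the paper because the integrand carries $\xi_1+\cdots+\xi_d$ rather than a single $\xi_\ell$, which is exactly the sum over the $d$ possible positions of the odd component that you describe.
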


\begin{proof}
\textbf{For $d=2$, $|k|=1$,}
\begin{align*}
\widehat{\alpha}^{(k)}_{t,\varepsilon}(0)&=\frac{\iota}{(2\pi)^2}\int_0^t\int_0^s\int_{\mathbb{R}^2}e^{\iota\langle \xi,B_s^H-B_r^H\rangle}(\xi_1+\xi_2)e^{-\varepsilon|\xi|^2/2}d\xi drds\\
&=\sum_{q=1}^{+\infty}I_{2q-1}(f_{2q-1,\varepsilon}),
\end{align*}
where $f_{2q-1,\varepsilon}\in (\mathcal{H}^2)^{\otimes(2q-1)}$ and
$$f_{n,\varepsilon}=\frac1{n!}\int_{0<r<s<t}\mathbb{E}[D_{x_1,\cdots,x_n}^{i_1,\cdots,i_n}f_{\varepsilon}(B_s^H-B_r^H)]drds$$
with $i_j\in{1,2}$, $x_j\in[0,t]$ for all $j=1,2,\cdots,n$.

Let us compute the expectation
$$\mathbb{E}[D_{x_1,\cdots,x_n}^{i_1,\cdots,i_n}f_{\varepsilon}(B_s^H-B_r^H)]=\mathbb{E}[\partial^{i_1}\cdots\partial^{i_n}f_{\varepsilon}(B_s^H-B_r^H)]\prod_{j=1}^n\mathds{1}_{[r,s]}(x_j)$$
and
\begin{align*}
&\mathbb{E}[\partial^{i_1}\cdots\partial^{i_n}f_{\varepsilon}(B_s^H-B_r^H)]\\
&=\frac{\iota^{n+1}}{(2\pi)^2}\int_{\mathbb{R}^2}(\xi_1+\xi_2)(\xi_{i_1}\xi_{i_2}\cdots\xi_{i_n})\mathbb{E}[e^{\iota\langle \xi,B_s^H-B_r^H\rangle}]e^{-\varepsilon|\xi|^2/2}d\xi\\
&=\frac{\iota^{n+1}}{(2\pi)^2}\int_{\mathbb{R}^2}(\xi_1+\xi_2)(\xi_{i_1}\xi_{i_2}\cdots\xi_{i_n})e^{-\frac12(|s-r|^{2H}+\varepsilon)|\xi|^2}d\xi\\
&=2(\iota)^{n+1}(2\pi)^{-1}(|s-r|^{2H}+\varepsilon)^{-1-\frac{n+1}{2}}\mathbb{E}[X_1X_{i_1}X_{i_2}\cdots X_{i_n}],
\end{align*}
where
\begin{equation*}
\mathbb{E}[X_1X_{i_1}X_{i_2}\cdots X_{i_n}]=
\begin{cases}
    \frac{(2m_1)!(2m_2)!}{(m_1)!(m_2)!2^m}, &\mbox{\text{if} $n=2(m_1+m_2)-1$, the number of $i_k=1$ is $2m_1-1$}\\
    & \mbox{ and the number of $i_k=2$ is $2m_2$,}\\
    0, &\mbox{\text{otherwise}.}
   \end{cases}
\end{equation*}

For $n=2q-1=2(q_1+q_2)-1$ with $m_1=q_1$ and $m_2=q_2$, we have
\begin{align*}
f_{2q-1,\varepsilon}=\beta_q\int_{0<r<s<t}(|s-r|^{2H}+\varepsilon)^{-q-1}\prod_{j=1}^{2q-1}\mathds{1}_{[r,s]}(x_j)drds,
\end{align*}
where
$\beta_q=\frac{(-1)^q}{(2q-1)!\pi}\frac{(2q_1)!(2q_2)!}{(q_1)!(q_2)!2^q}$.

\textbf{For $d\geq3$ and $|k|=1$,}
\begin{align*}
\widehat{\alpha}^{(k)}_{t,\varepsilon}(0)=\sum_{q=1}^{+\infty}I_{2q-1}(f_{2q-1,\varepsilon}),
\end{align*}
where
$$f_{n,\varepsilon}=\frac{(\iota)^{n+1}}{n!}\frac{d}{(2\pi)^{d/2}}\int_{0<r<s<t}(|s-r|^{2H}+\varepsilon)^{-(n+1/2-d/2)}\prod_{j=1}^{n}\mathds{1}_{[r,s]}(x_j)drds\cdot\mathbb{E}[X_1X_{i_1}X_{i_2}\cdots X_{i_n}],$$
and
\begin{equation*}
\mathbb{E}[X_1X_{i_1}X_{i_2}\cdots X_{i_n}]=
\begin{cases}
    \frac{(2m_1)!\cdots(2m_d)}{(m_1)!\cdots(m_d)!2^m}, &\mbox{\text{if} $n=2(m_1+\cdots+m_2)-1$, the number of $i_k=1$ is $2m_1-1$}\\
    & \mbox{ and the number of $i_k=\ell$ is $2m_\ell$ for $\ell=2,\cdots,d$,}\\
    0, &\mbox{\text{otherwise}.}
   \end{cases}
\end{equation*}

Thus,
\begin{align*}
f_{2q-1,\varepsilon}=\beta_{q,d}\int_{0<r<s<t}(|s-r|^{2H}+\varepsilon)^{-q-d/2}\prod_{j=1}^{2q-1}\mathds{1}_{[r,s]}(x_j)drds,
\end{align*}
where
$\beta_{q,d}=\frac{(-1)^qd}{(2q-1)!(2\pi)^{d/2}}\frac{(2q_1)!\cdots(2q_d)!}{(q_1)!\cdots(q_d)!2^q}$ and $q=q_1+\cdots q_d\geq1$.
\end{proof}

\begin{lemma} \label{sec3-lem-intlog}
If $Hd=1$, as $\varepsilon\to0$, we have

(i) $$\int_0^{\varepsilon^{-\frac1H}}x^{H-\frac12}(1+x^{H})^{-\frac{d}2-1}dx=O\left(\log\frac1{\varepsilon}\right)$$
and

(ii)$$\int_0^1x^{2H}(\varepsilon+x^{2H})^{-\frac{d}2-1}dx=O\left(\log\frac1{\varepsilon}\right).$$
\end{lemma}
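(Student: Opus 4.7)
\medskip
\noindent\textbf{Proof plan for Lemma~\ref{sec3-lem-intlog}.}

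My plan is to reduce both integrals to the same canonical integral by a simple power-substitution, then read off the logarithmic divergence from the tail behavior. The key observation is that the critical condition $Hd=1$ makes a particular exponent vanish, so no extra factor of $\varepsilon$ is picked up; what remains is an integral whose integrand decays exactly like $u^{-1}$ at infinity.

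For part (i), I would substitute $u=x^{H}$, so that $dx=\frac{1}{H}u^{(1-H)/H}\,du$ and $x^{H-1/2}=u^{(2H-1)/(2H)}$. Collecting exponents gives $x^{H-1/2}\,dx=\frac{1}{H}u^{1/(2H)}\,du$. Using $H=1/d$, one has $1/(2H)=d/2$, and the upper limit becomes $\varepsilon^{-1}$. Hence
\begin{equation*}
\int_0^{\varepsilon^{-1/H}}x^{H-\frac12}(1+x^{H})^{-\frac{d}{2}-1}\,dx
=\frac{1}{H}\int_0^{\varepsilon^{-1}} u^{d/2}(1+u)^{-d/2-1}\,du.
\end{equation*}
Since $u^{d/2}(1+u)^{-d/2-1}\sim u^{-1}$ as $u\to\infty$ and is bounded near $u=0$, this integral is $O(\log 1/\varepsilon)$, which is the desired estimate.

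For part (ii), I would use the scaling $x=(\varepsilon u)^{1/(2H)}$, i.e.\ $u=x^{2H}/\varepsilon$. Then $\varepsilon+x^{2H}=\varepsilon(1+u)$, $x^{2H}=\varepsilon u$, and $dx=\frac{1}{2H}\varepsilon^{1/(2H)}u^{1/(2H)-1}\,du$. Substituting,
\begin{equation*}
\int_0^{1}x^{2H}(\varepsilon+x^{2H})^{-\frac{d}{2}-1}\,dx
=\frac{1}{2H}\,\varepsilon^{\,-d/2+1/(2H)}\int_0^{\varepsilon^{-1}} u^{1/(2H)}(1+u)^{-d/2-1}\,du.
\end{equation*}
The critical condition $H=1/d$ makes the exponent $-d/2+1/(2H)$ vanish, and $1/(2H)=d/2$, so the integral reduces again to the canonical one $\int_0^{\varepsilon^{-1}} u^{d/2}(1+u)^{-d/2-1}\,du$, whose tail contributes $\log 1/\varepsilon$ exactly as above.

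There is no real obstacle: the only thing one must verify is that the exponents indeed collapse under $Hd=1$, and that the integrand $u^{d/2}(1+u)^{-d/2-1}$ is integrable on $[0,1]$ and asymptotic to $u^{-1}$ on $[1,\infty)$. Both are immediate, and a sharper statement of the form $\frac{1}{H}\log(1/\varepsilon)(1+o(1))$ could even be extracted if needed later in Section~3.
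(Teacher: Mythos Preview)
Your argument is correct. Both substitutions are computed accurately, and the reduction of parts (i) and (ii) to the single canonical integral
\[
\int_0^{\varepsilon^{-1}} u^{d/2}(1+u)^{-d/2-1}\,du
\]
(with the prefactors $1/H$ and $1/(2H)$ respectively) is a clean way to see the logarithmic growth; the integrand is $\sim u^{-1}$ at infinity and bounded near $0$, so $O(\log 1/\varepsilon)$ follows immediately, and the sharper asymptotics $\frac{1}{H}\log(1/\varepsilon)$ and $\frac{1}{2H}\log(1/\varepsilon)$ drop out as you note.

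The paper takes a slightly different route: rather than substituting and reading off tail behavior, it applies L'H\^{o}pital's rule directly to the ratio $\big(\text{integral}\big)/\log(1/\varepsilon)$ in each case, differentiating the upper limit of integration and simplifying the resulting expression using $Hd=1$ to obtain the limits $1/H$ and $1/(2H)$. For part (ii) the paper also rescales $x\mapsto x\,\varepsilon^{-1/(2H)}$ before applying L'H\^{o}pital, which is essentially your substitution in disguise. Your approach has the minor advantage of unifying the two parts into a single canonical integral and avoiding L'H\^{o}pital entirely; the paper's approach yields the exact limiting constants with one line of calculus. Either way the content is the same and your version is perfectly acceptable.
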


\begin{proof}
For (i), by L'H\^{o}spital's rule, we have
\begin{align*}
\lim_{\varepsilon\to0}\frac1{\log\frac1{\varepsilon}}\int_0^{\varepsilon^{-\frac1H}}x^{H-\frac12}(1+x^{H})^{-\frac{d}2-1}dx
&=\lim_{\varepsilon\to0}\frac1H\varepsilon^{-1-\frac1{2H}}(1+\varepsilon^{-1})^{-\frac{d}2-1}\\
&=\lim_{\varepsilon\to0}\frac1H(\varepsilon+1)^{-\frac{d}2-1}\\
&=\frac1H,
\end{align*}
where we use the condition $Hd=1$ in the second equality.

For (ii), take the variable transformation $x=x\varepsilon^{-\frac1{2H}}$,
\begin{align*}
\lim_{\varepsilon\to0}\frac1{\log\frac1{\varepsilon}}\int_0^1x^{2H}(\varepsilon+x^{2H})^{-\frac{d}2-1}dx
&=\lim_{\varepsilon\to0}\frac1{\log\frac1{\varepsilon}}\int_0^{\varepsilon^{-\frac1{2H}}}x^{2H}(1+x^{2H})^{-\frac{d}2-1}dx\\
&=\lim_{\varepsilon\to0}\frac1{2H}(\varepsilon+1)^{-\frac{d}2-1}\\
&=\frac1{2H},
\end{align*}
where we use L'H\^{o}spital's rule and the condition $Hd=1$ in the second equality.

\end{proof}

\section{Proof of Theorem \ref{sec1-th.d=3-2}}
In this section, the proof of  Theorem \ref{sec1-th.d=3-2}  is taken into account, we will consider the case of $H>1/d$, $d\geq3$ and $|k|=1$.
$\widehat{\alpha}^{'}_{t,\varepsilon}(0)$ has the following chaos decomposition
$$
\widehat{\alpha}^{'}_{t,\varepsilon}(0)=\sum_{q=1}^\infty I_{2q-1}(f_{2q-1,\varepsilon}),
$$
where
$$f_{2q-1,\varepsilon}(x_1,\cdots,x_{2q-1})=\int_{D}f_{2q-1,\varepsilon,s,r}(x_1,\cdots,x_{2q-1})drds$$
with $D=\{(r,s): 0<r<s<t\}$.

For $q=1$,
\begin{equation}\label{sec2-eq2.3}
\mathbb{E}\Big[\Big|I_1(f_{1,\varepsilon})\Big|^2\Big]=\int_{D^2}\langle f_{1,\varepsilon,s_1,r_1},f_{1,\varepsilon,s_2,r_2}\rangle_{\mathfrak{H}}dr_1dr_2ds_1ds_2,
\end{equation}
where $\mathcal{H}^d$ is the Hilbert space obtained by taking the completion of the step functions endowed with the inner product
$$\langle \mathds{1}_{[a,b]}, \mathds{1}_{[c,d]}\rangle_{\mathcal{H}^d}:=\mathbb{E}[(B_b^H-B_a^H)(B_d^H-B_c^H)].$$

For $q>1$, we have to describe the terms $\langle f_{2q-1,\varepsilon,s_1,r_1},f_{2q-1,\varepsilon,s_2,r_2}\rangle_{(\mathfrak{H}^d)^{\otimes(2q-1)}}$,
where $(\mathcal{H}^{d})^{\otimes(2q-1)}$ is the $(2q-1)$-th tensor product of $\mathcal{H}^d$. For every $x, ~u_1, ~u_2>0$, we define
$$\mu(x,u_1,u_2)=|\mathbb{E}[B_{u_1}^H(B_{x+u_2}^H-B_x^H)]|.$$

Then
\begin{equation}\label{sec2-eq2.ine}
\langle f_{2q-1,\varepsilon,s_1,r_1},f_{2q-1,\varepsilon,s_2,r_2}\rangle_{(\mathcal{H}^d)^{\otimes(2q-1)}}=\beta_{q,d}^2G^{(q,d)}_{\varepsilon,r_2-r_1}(s_1-r_1,s_2-r_2),
\end{equation}
where
$$G^{(q,d)}_{\varepsilon,x}(u_1,u_2)=\Big(\varepsilon+u_1^{2H}\Big)^{-\frac{d}2-q}\Big(\varepsilon+u_2^{2H}\Big)^{-\frac{d}2-q}\mu(x,u_1,u_2)^{2q-1}.$$

Before we give the proof of the main result, we give some useful lemmas below. In the sequel, we just consider the case $H=\frac1d, d\geq3$.

\begin{lemma} \label{sec3-lem3.4}
For $\widehat{\alpha}^{'}_{t,\varepsilon}(0)$ defined in \eqref{sec1-eq1.2}, then
$$
\lim_{\varepsilon\to0}\mathbb{E}\Big[\Big|\Big(\varepsilon^{-\frac1H}\log1/\varepsilon\Big)^{H-\frac12}\widehat{\alpha}^{'}_{t,\varepsilon}(0)\Big|^2\Big]=\sigma^2.
$$
\end{lemma}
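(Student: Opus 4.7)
The plan is to exploit the Wiener chaos expansion of Lemma 2.2. By orthogonality,
$$\mathbb{E}\bigl[|\widehat{\alpha}'_{t,\varepsilon}(0)|^2\bigr] = \sum_{q=1}^{\infty}(2q-1)!\,\|f_{2q-1,\varepsilon}\|^2_{(\mathcal{H}^d)^{\otimes(2q-1)}},$$
and by (2.2) each summand equals $\beta_{q,d}^2$ times a fourfold integral of $G^{(q,d)}_{\varepsilon,r_2-r_1}(s_1-r_1,s_2-r_2)$ over $D^2$. I would then show that only the first chaos ($q=1$) contributes to the limit $\sigma^2$ under the prescribed scaling and that all higher chaoses vanish in the limit.

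For the $q=1$ term, I would partition $D^2$ into the three subregions $D_1, D_2, D_3$ of Lemma 2.1 and apply the corresponding changes of variables $(r_1,a,b,c)$. In each subregion I would Taylor expand $\mu(x,u_1,u_2)$ around its natural small parameters and rescale the microscopic variables by $u_i = \varepsilon^{1/(2H)} v_i$. Two standard one-dimensional integrals drive the asymptotics: the finite $\int_0^t u(\varepsilon+u^{2H})^{-d/2-1}du \to t^{1-2H}/(1-2H)$, expected to appear twice and produce the $t^{2-4H}/(1-2H)^2$ in $\sigma^2$, together with the logarithmically divergent integrals of Lemma 2.3, which supply the coupled $\varepsilon^{2-d}(\log 1/\varepsilon)^{1-2H}$ behavior. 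The dominant contribution is expected to come from the disjoint-interval region $D_3$, where the expansion $|\mu|\approx H(1-2H)u_1 u_2 x^{2H-2}$ factorizes and meshes with Lemma 2.3. Combining all three regions and using $\beta_{1,d}^2 = d^2/(2\pi)^d$ together with $Hd=1$ should then give $\|f_{1,\varepsilon}\|^2 \sim \sigma^2\,(\varepsilon^{-1/H}\log 1/\varepsilon)^{1-2H}$, which is precisely what the scaling requires.

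For $q \geq 2$, I would use $|\mu|^{2q-1}\le(\lambda\rho)^{(2q-1)/2}$ from Cauchy--Schwarz in $\mathcal{H}$, together with Lemma 2.1's lower bounds on $\lambda\rho-\mu^2$, to bound each $\|f_{2q-1,\varepsilon}\|^2$ in each of the three subregions. Combined with factorial estimates on the combinatorial constants $\beta_{q,d}^2$, these bounds should show that the $q\geq 2$ tail is $o\bigl((\varepsilon^{-1/H}\log 1/\varepsilon)^{1-2H}\bigr)$ after summation in $q$.

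The main obstacle is the precise leading-order computation for $q=1$: the limit requires both the $\varepsilon^{2-d}$ divergence and the $(\log 1/\varepsilon)^{1-2H}$ factor to emerge simultaneously, which demands careful tracking of micro- versus macro-scale contributions in all three subregions of Lemma 2.1, and a careful application of Lemma 2.3 (especially part (i), whose range $(0,\varepsilon^{-1/H})$ foreshadows the $\varepsilon^{-1/H}$ scale appearing in the theorem). The $q\geq 2$ tail estimate is more routine but still requires a $q$-uniform bound on the combinatorial and integral factors.
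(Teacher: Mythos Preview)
Your approach differs from the paper's in an important structural way. The paper does \emph{not} prove Lemma~3.1 by summing the chaos expansion. Instead it computes the full second moment in closed form via the identity
\[
\mathbb{E}\bigl[f^{(k)}_\varepsilon(X)f^{(k)}_\varepsilon(Y)\bigr]=d^2(2\pi)^{-d}\,|\varepsilon I+\Sigma|^{-d/2-1}\,\Sigma_{1,2},
\]
obtained from the one–dimensional formula of Jaramillo--Nualart. This yields $\mathbb{E}|\widehat{\alpha}'_{t,\varepsilon}(0)|^2=V_1(\varepsilon)+V_2(\varepsilon)+V_3(\varepsilon)$ with $V_i$ the integral of $|\varepsilon I+\Sigma|^{-d/2-1}|\mu|$ over the region $D_i$ of Lemma~2.1. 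The determinant $|\varepsilon I+\Sigma|$ is where the local non--determinism bounds $\lambda\rho-\mu^2\ge\cdots$ enter; the paper shows $V_1,V_2=O(\log 1/\varepsilon)$ and extracts the exact limit from $V_3$ by the rescaling and expansion you describe. The chaos expansion is used only afterwards, in a separate Lemma~3.2, to show that the \emph{first} chaos alone already gives the same limit $\sigma^2$; the vanishing of the $q\ge 2$ tail then follows by subtraction.

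Your plan essentially merges these two lemmas, proving Lemma~3.1 by doing Lemma~3.2 first and then bounding the tail directly. The $q=1$ analysis you outline is correct and matches the paper's Lemma~3.2. The difficulty is the tail. Your proposal to ``use $|\mu|^{2q-1}\le(\lambda\rho)^{(2q-1)/2}$ together with Lemma~2.1's lower bounds on $\lambda\rho-\mu^2$'' is confused: the quantity $\lambda\rho-\mu^2$ simply does not appear in the individual chaos norms $\|f_{2q-1,\varepsilon}\|^2$, whose integrand is $(\varepsilon+\lambda)^{-d/2-q}(\varepsilon+\rho)^{-d/2-q}|\mu|^{2q-1}$; it only arises in the closed form $|\varepsilon I+\Sigma|$ that the paper uses. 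With the Cauchy--Schwarz bound alone, each $q\ge 2$ term in region $D_3$ can be bounded by $C_q\,\varepsilon^{-1}$ (after a Beta–function computation), which is $o\bigl((\varepsilon^{-1/H}\log 1/\varepsilon)^{1-2H}\bigr)$ for each fixed $q$, but summing in $q$ then requires controlling the combinatorics of $\sum_{q_1+\cdots+q_d=q}\bigl(\prod_i(2q_i-1)!!\bigr)^2/(2q-1)!$, which you have not addressed and which is not entirely routine. The paper's route avoids this issue completely: once the full second moment and the first chaos are both shown to converge to $\sigma^2$, orthogonality gives the tail for free.
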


\begin{proof}
From Lemma 5.1 in Jaramillo and Nualart \cite{Jaramillo 2017}, we can see
$$\mathbb{E}[XYf_{\varepsilon}(X)f_{\varepsilon}(Y)]=(2\pi)^{-1}\varepsilon^2|\varepsilon I+A|^{-\frac32}A_{1,2},$$
where $(X,Y)\in\mathbb{R}\times\mathbb{R}$ is a jointly Gaussian vector with mean zero and covariance $A=(A_{i,j})_{i,j=1,2}$. Then for any Gaussian vector $(X, Y)\in \mathbb{R}^d\times\mathbb{R}^d$ and $k$-th ($|k|=1$) order derivative, we have
\begin{align*}
\mathbb{E}[f^{(k)}_{\varepsilon}(X)f^{(k)}_{\varepsilon}(Y)]&=\frac{d^2}{\varepsilon^2}(2\pi\varepsilon)^{-d}\mathbb{E}[X_1Y_1e^{-\frac{X_1^2+\cdots+X_d^2+Y_1^2+\cdots+Y_d^2}{2\varepsilon}}]\\
&=\frac{d^2}{\varepsilon^2}(2\pi\varepsilon)^{-1}\int_{\mathbb{R}^2}x_1y_2e^{-\frac{x_1^2+y_1^2}{2\varepsilon}}f_{\Sigma}(x_1,y_1)dx_1dy_1\\
&\qquad\times(2\pi\varepsilon)^{-(d-1)}\int_{\mathbb{R}^{2(d-1)}}e^{-\frac{x_2^2+y_2^2+\cdots+x_d^2+y_d^2}{2\varepsilon}}f_{\Sigma^{d-1}}(\widetilde{x},\widetilde{y})d\widetilde{x}d\widetilde{y}\\
&=d^2\varepsilon^{-2}(2\pi)^{-1}\varepsilon^2|\varepsilon I+\Sigma|^{-\frac{3}{2}}\Sigma_{1,2}\times(2\pi)^{-(d-1)}|\varepsilon I+\Sigma|^{-\frac{d-1}{2}}\\
&=d^2(2\pi)^{-d}|\varepsilon I+\Sigma|^{-\frac{d}{2}-1}\Sigma_{1,2},
\end{align*}
where $\Sigma=(\Sigma_{i,j})_{i,j=1,2}$ is the covariance matrix of $(B^{H,1}_s-B^{H,1}_r,B^{H,1}_{s'}-B^{H,1}_{r'})$, $\widetilde{x}=(x_2,\cdots,x_d), \widetilde{y}=(y_2,\cdots,y_d)$ and $\Sigma^{d-1}$ is the covariance matrix of $(\widetilde{B}^{H}_s-\widetilde{B}^{H}_r,\widetilde{B}^{H}_{s'}-\widetilde{B}^{H}_{r'})$ ($\widetilde{B}^H$ denotes the $(d-1)$-dimensional fBm).

Thus,
$$
\mathbb{E}\Big[\Big|\widehat{\alpha}^{'}_{t,\varepsilon}(0)\Big|^2\Big]=V_1(\varepsilon)+V_2(\varepsilon)+V_3(\varepsilon)
$$
and
\begin{align*}
V_i(\varepsilon)=\frac{2d^2}{(2\pi)^d}\int_{D_i}|\varepsilon I+\Sigma|^{-\frac{d}2-1}|\mu| drdsdr'ds'
\end{align*}
where $D_i$ (i=1, 2, 3) defined in Lemma \ref{sec3-lem3.2}
and $\Sigma$ is a covariance matrix with $\Sigma_{1,1}=\lambda$, $\Sigma_{2,2}=\rho$,  $\Sigma_{1,2}=\mu$ given in Lemma \ref{sec3-lem3.2}.

Next, we will split the proof into three parts to consider $V_1(\varepsilon)$, $V_2(\varepsilon)$ and $V_3(\varepsilon)$,  respectively.

\textbf{For the $V_1(\varepsilon)$ term}, changing the coordinates $(r, r', s, s')$ by $(r, a=r'-r, b=s-r', c=s'-s)$ and integrating the $r$ variable, we get
\begin{align*}
V_1(\varepsilon)
&\leq\,C\,\int_{[0,t]^4}|\varepsilon I+\Sigma|^{-\frac{d}2-1}|\mu| drdadbdc\\
&=\,C\,\int_{[0,t]^3}|\varepsilon I+\Sigma|^{-\frac{d}2-1}|\mu| dadbdc\\
&=:\widetilde{V_1}(\varepsilon).
\end{align*}

Since
$$|\mu|=\frac12\left|(a+b+c)^{2H}+b^{2H}-a^{2H}-c^{2H}\right|\leq\sqrt{\lambda\rho}=(a+b)^{H}(b+c)^{H}$$
and
\begin{align*}
|\varepsilon I+\Sigma|&=(\varepsilon+\Sigma_{1,1})(\varepsilon+\Sigma_{2,2})-\Sigma^2_{1,2}\\
&\geq C\Big[\varepsilon^2+\varepsilon((a+b)^{2H}+(b+c)^{2H})+a^{2H}(c+b)^{2H}+c^{2H}(a+b)^{2H}\Big]\\
&\geq C\Big[\varepsilon^2+(a+b)^{H}(b+c)^{H}(\varepsilon+(ac)^{H})\Big]\\
&\geq C(a+b)^{H}(b+c)^{H}(\varepsilon+(ac)^{H}),
\end{align*}
where we use the Young's inequality in the second to last inequality.

Then, we have
$$\widetilde{V_1}(\varepsilon)\leq C\int_{[0,t]^3}(a+b)^{-\frac{Hd}{2}}(b+c)^{-\frac{Hd}{2}}\Big(\varepsilon+(ac)^{H}\Big)^{-\frac{d}{2}-1}dadbdc.$$

We will estimate this integral over the regions $\{b\leq (a\vee c)\}$ and $\{b > (a\vee c)\}$ separately, and we will denote these two integrals by $\widetilde{V_{1,1}}$ and $\widetilde{V_{1,2}}$, respectively. If $b\leq (a\vee c)$, without loss of generality, we can assume $c\geq a$ and thus $b\leq c$. For a given small enough constant $\varepsilon_1>0$,

\begin{align*}
\widetilde{V_{1,1}}(\varepsilon)&\leq C\int_{[0,t]^3}(a+b)^{H-\frac{Hd}{2}}(a+b)^{-H}(b+c)^{-\frac{Hd}{2}}\Big(\varepsilon+(ac)^{H}\Big)^{-\frac{d}{2}-1}dadbdc\\
&\leq C\int_{[0,t]^3}b^{-H-\frac{Hd}2}a^{H-\frac{Hd}2}\Big(\varepsilon+(ac)^{H}\Big)^{-\frac{d}{2}-1}dadbdc\\
&\leq C \varepsilon^{\frac1H-\frac{d}2-1}\int_0^{t\varepsilon^{-\frac1{H}}}\int_0^ta^{H-\frac{Hd}2}\Big(1+(ac)^{H}\Big)^{-\frac{d}2-1}dadc,
\end{align*}
where we make the change of variable $c=c\,\varepsilon^{-\frac1H}$ in the last inequality.

By L'H\^{o}spital's rule, we have
\begin{align*}
\lim_{\varepsilon\to0}\widetilde{V_{1,1}}(\varepsilon)
&\leq\lim_{\varepsilon\to0}\frac{-\frac{Ct}{H}\varepsilon^{-1-\frac1H}\int_0^ta^{H-\frac12}(1+t^Ha^H\varepsilon^{-1})^{-\frac{d}2-1}da}{(1-\frac1{2H})\varepsilon^{-\frac1{2H}}}\\
&=\lim_{\varepsilon\to0}\frac{\frac{Ct}{H}}{\frac1{2H}-1}\int_0^{t\varepsilon^{-\frac1H}}a^{H-\frac12}(1+t^Ha^H)^{-\frac{d}2-1}da\\
&=O\left(\log\frac1{\varepsilon}\right),
\end{align*}
where we use Lemma \ref{sec3-lem-intlog} in the last equality.

If $b>(a\vee c)$, we can see that
$$\mu=\frac12((a+b+c)^{2H}+b^{2H}-a^{2H}-c^{2H})\leq C\, b^{2H}$$
and
\begin{align*}
|\varepsilon I+\Sigma|
&\geq C\Big[\varepsilon^2+\varepsilon((a+b)^{2H}+(b+c)^{2H})+a^{2H}(c+b)^{2H}+c^{2H}(a+b)^{2H}\Big]\\
&\geq C\, b^{2H}(\varepsilon+(a\vee c)^{2H}).
\end{align*}

Then
\begin{align*}
\limsup_{\varepsilon\to0}\frac{\widetilde{V_{1,2}}(\varepsilon)}{\log \frac1\varepsilon}&\leq \limsup_{\varepsilon\to0}\frac{C}{\log \frac1\varepsilon}\int_{[0,t]^3}b^{-Hd}\Big(\varepsilon+(a\vee c)^{2H}\Big)^{-\frac{d}{2}-1}dadbdc\\
&\leq \limsup_{\varepsilon\to0}\frac{C}{\log \frac1\varepsilon}\int_{[0,t]^3}b^{-2H}a^{2H-Hd}\Big(\varepsilon+(a\vee c)^{2H}\Big)^{-\frac{d}{2}-1}dadbdc\\
&\leq \limsup_{\varepsilon\to0}\frac{C}{\log \frac1\varepsilon}\int_0^t\int_0^aa^{2H-1}(\varepsilon+a^{2H})^{-\frac{d}2-1}dcda\\
&=\limsup_{\varepsilon\to0}\frac{C}{\log \frac1\varepsilon}\int_0^ta^{2H}(\varepsilon+a^{2H})^{-\frac{d}2-1}da<\infty,
\end{align*}
where we use Lemma \ref{sec3-lem-intlog} in the last inequality.

So, by the above result, we can obtain
\begin{equation}\label{sec3-eq3.7}
\lim_{\varepsilon\to0}\Big(\varepsilon^{-\frac1H}\log1/\varepsilon\Big)^{2H-1}V_{1}(\varepsilon)=0.
\end{equation}

\textbf{For the $V_2(\varepsilon)$ term}, changing the coordinates $(r, r', s, s')$ by $(r, a=r'-r, b=s'-r', c=s-s')$ and integrating the $r$ variable, we get
\begin{align*}
V_2(\varepsilon)
\leq\,c_{q,d}\int_{[0,t]^3}|\varepsilon I+\Sigma|^{-\frac32}|\mu| dadbdc=:\widetilde{V_2}(\varepsilon).
\end{align*}

By
\begin{align*}
|\mu|&=\frac12\Big((a+b)^{2H}+(b+c)^{2H}-a^{2H}-c^{2H}\Big)\\
&=Hb\int_0^1\left((a+bv)^{2H-1}+(c+bv)^{2H-1}\right)dv\\
&\leq \left\{
\begin{aligned}
   2Hb^{2H},~~\qquad &\text{if} ~b\leq(a\vee c),  \\
   2Hb(a\wedge c)^{2H-1},~~\qquad &\text{if} ~b>(a\vee c).  \\
   \end{aligned}
\right.
\end{align*}
and
$$|\varepsilon I+\Sigma|=(\varepsilon+\Sigma_{1,1})(\varepsilon+\Sigma_{2,2})-\Sigma^2_{1,2}\geq \varepsilon^2+\varepsilon((a+b+c)^{2H}+b^{2H})+C\, b^{2H}(a^{2H}+c^{2H}),$$
we have
$$\widetilde{V_2}(\varepsilon)\leq C\int_{[0,t]^3}\mu\Big(\varepsilon((a+b+c)^{2H}+b^{2H})+ b^{2H}(a^{2H}+c^{2H})\Big)^{-\frac{d}2-1}dadbdc.$$

We again estimate this integral over the regions $\{b\leq (a\vee c)\}$ and $\{b > (a\vee c)\}$ separately, and denote these two integrals by $\widetilde{V_{2,1}}$ and $\widetilde{V_{2,2}}$, respectively. If $b\leq (a\vee c)$,
\begin{align*}
\widetilde{V_{2,1}}(\varepsilon)&\leq C\int_{[0,t]^3}b^{2H}\Big(\varepsilon(a\vee c)^{2H}+b^{2H}(a\vee c)^{2H}\Big)^{-\frac{d}{2}-1}dadbdc\nonumber\\
&\leq C\int_{[0,t]^3}(a\vee c)^{-1-2H}b^{2H}\Big(\varepsilon+b^{2H}\Big)^{-\frac{d}2-1}dadbdc\nonumber\\
&\leq C \int_0^tb^{2H}\Big(\varepsilon+b^{2H}\Big)^{-\frac{d}2-1}db\nonumber\\
&=O(\log \frac1\varepsilon), ~as~ \varepsilon\to0,
\end{align*}
where we use Lemma \ref{sec3-lem-intlog} in the last equality.

If $b>(a\vee c)$, similarly, we have
\begin{align*}
\limsup_{\varepsilon\to0}\frac{\widetilde{V_{2,2}}(\varepsilon)}{\log \frac1\varepsilon}
&\leq \limsup_{\varepsilon\to0}\frac{C}{\log \frac1\varepsilon}\int_{[0,t]^3}b(a\wedge c)^{2H-1}[b^{2H}(\varepsilon+(a\vee c)^{2H})]^{-\frac{d}2-1}dadbdc\\
&\leq \limsup_{\varepsilon\to0}\frac{C}{\log \frac1\varepsilon}\int_0^tb^{-2H}db\int_{[0,t]^2}(a\wedge c)^{2H-1}(\varepsilon+(a\vee c)^{2H})]^{-\frac{d}2-1}dcda\\
&\leq \limsup_{\varepsilon\to0}\frac{C}{\log \frac1\varepsilon}\int_0^t\int_0^a c^{2H-1}[\varepsilon+a^{2H}]^{-\frac{d}2-1}dcda\\
&=\limsup_{\varepsilon\to0}\frac{C}{\log \frac1\varepsilon}\int_0^ta^{2H}(\varepsilon+a^{2H})^{-\frac{d}2-1}da<\infty.
\end{align*}
So, by the above result, we can obtain
\begin{align}\label{sec3-eq3.8}
\lim_{\varepsilon\to0}\Big(\varepsilon^{-\frac1H}\log1/\varepsilon\Big)^{2H-1}V_{2}(\varepsilon)=0.
\end{align}

\textbf{For the $V_3(\varepsilon)$ term}.
$$V_3(\varepsilon)
=\frac{2d^2}{(2\pi)^d}\int_{D_3}|\varepsilon I+\Sigma|^{-d/2-1}|\mu| dsdrds'dr'$$
We first change the coordinates $(r, r', s, s')$ by $(r, a=s-r, b=r'-s, c=s'-r')$ and then by
\begin{align*}
|\mu|&=\frac12\Big|(a+b+c)^{2H}+b^{2H}-(b+c)^{2H}-(a+b)^{2H}\Big|\\
&=H(1-2H)ac\int_0^1\int_0^1(b+ax+cy)^{2H-2}dxdy\\
&=:\mu(a+b,a,c),
\end{align*}
and $|\varepsilon I+\Sigma|=\varepsilon^2+\varepsilon(a^{2H}+c^{2H})+(ac)^{2H}-\mu(a+b,a,c)^2$,
we can find
\begin{align*}
V_3(\varepsilon)
&=\frac{2d^2}{(2\pi)^d}\int_{[0,t]^3}\mathds{1}_{(0,t)}(a+b+c)(t-a-b-c)|\varepsilon I+\Sigma|^{-d/2-1}|\mu| dadbdc\\
&=\frac{2d^2}{(2\pi)^d} \int_{[0,t\varepsilon^{-\frac1{2H}}]^2\times[0,t]}\mathds{1}_{(0,t)}(b+\varepsilon^{\frac1{2H}}(a+c))\\
&\qquad\qquad\times \frac{(t-b-\varepsilon^{\frac1{2H}}(a+c))\mu(\varepsilon^{\frac1{2H}}a+b,\varepsilon^{\frac1{2H}}a,\varepsilon^{\frac1{2H}}c)}{\Big[(1+a^{2H})(1+c^{2H})-\varepsilon^{-2}\mu(\varepsilon^{\frac1{2H}}a+b,\varepsilon^{\frac1{2H}}a,\varepsilon^{\frac1{2H}}c)^2\Big]^{\frac{d}{2}+1}}
\varepsilon^{\frac1{H}-2(d/2+1)}dbdadc,
\end{align*}
where we change the coordinates $(a,b,c)$ by $(\varepsilon^{-\frac1{2H}}a, b, \varepsilon^{-\frac1{2H}}c)$ in the last equality.

By the definition of $\mu(a+b,a,c)$, it is easy to find
\begin{align*}
\mu(\varepsilon^{\frac1{2H}}a+b,\varepsilon^{\frac1{2H}}a,\varepsilon^{\frac1{2H}}c)&= H(1-2H)
\varepsilon^{\frac1{H}}ac\int_{[0,1]^2}(b+\varepsilon^{\frac1{2H}}av_1+\varepsilon^{\frac1{2H}}cv_2)^{2H-2}dv_1dv_2
\end{align*}
and
$$\varepsilon^{-\frac1{H}}\mu(\varepsilon^{\frac1{2H}}a+b,\varepsilon^{\frac1{2H}}a,\varepsilon^{\frac1{2H}}c)=H(1-2H)acb^{2H-2}+O(\varepsilon^{\frac1{2H}}ac(a+c)).$$

The other part of the integrand in $V_3(\varepsilon)$ is
\begin{align*}
&\Big[(1+a^{2H})(1+c^{2H})-\varepsilon^{-2}\mu(\varepsilon^{\frac1{2H}}a+b,\varepsilon^{\frac1{2H}}a,\varepsilon^{\frac1{2H}}c)^2\Big]^{-\frac{d}2-1}\\
&\qquad=\Big[(1+a^{2H})(1+c^{2H})\Big]^{-\frac{d}2-1}+O\Big(\varepsilon^{\frac2H-2} a^2c^2[(1+a^{2H})(1+c^{2H})]^{-\frac{d}2-3}\Big).
\end{align*}

Let $O_{\varepsilon,3}=\{[0,t\varepsilon^{-\frac1{2H}}]^2\times[(\log\frac1{\varepsilon})^{-1},t]\}$ and
\begin{align*}
\widetilde{V_3}(\varepsilon)
&=\frac{2d^2}{(2\pi)^d}\int_{O_{\varepsilon,3}}\mathds{1}_{(0,t)}(b+\varepsilon^{\frac1{2H}}(a+c))\\
&\qquad\qquad\times \frac{(t-b-\varepsilon^{\frac1{2H}}(a+c))\mu(\varepsilon^{\frac1{2H}}a+b,\varepsilon^{\frac1{2H}}a,\varepsilon^{\frac1{2H}}c)}{\Big[(1+a^{2H})(1+c^{2H})-\varepsilon^{-2}\mu(\varepsilon^{\frac1{2H}}a+b,\varepsilon^{\frac1{2H}}a,\varepsilon^{\frac1{2H}}c)^2\Big]^{\frac{d}{2}+1}}
\varepsilon^{\frac1{H}-2(d/2+1)}dbdadc.
\end{align*}

Note that
\begin{align*}
&\limsup_{\varepsilon\to0}\Big(\varepsilon^{-\frac1H}\log1/\varepsilon\Big)^{2H-1}|V_3(\varepsilon)-\widetilde{V_3}(\varepsilon)|\\
&\leq \limsup_{\varepsilon\to0}c_{H,d}\Big(\varepsilon^{-\frac1H}\log1/\varepsilon\Big)^{2H-1}\int_{[0,t\varepsilon^{-\frac1{2H}}]^2\times[0,(\log\frac1{\varepsilon})^{-1}]}\varepsilon\mu(a,a,c)\\
&\qquad\qquad\qquad\times\Big[(1+a^{2H})(1+c^{2H})-\mu^2(a,a,c)\Big]^{-\frac{d}{2}-1}
\varepsilon^{\frac1{H}-2(d/2+1)}dbdadc\\
&\leq \limsup_{\varepsilon\to0}c_{H,d}\Big(\varepsilon^{-\frac1H}\log1/\varepsilon\Big)^{2H-1}(\varepsilon\log\frac1{\varepsilon})^{-1}\int_{[0,t\varepsilon^{-\frac1{2H}}]^2}(a+c)^{2H}\Big[1+(ac)^{2H}-(a+c)^{2H}\Big]^{-\frac{d}{2}-1}dadc\\
&=0.
\end{align*}

Thus,
\begin{equation}\label{sec3-eq V-V3}
\lim_{\varepsilon\to0}\Big(\varepsilon^{-\frac1H}\log1/\varepsilon\Big)^{2H-1}V_3(\varepsilon)=\lim_{\varepsilon\to0}\Big(\varepsilon^{-\frac1H}\log1/\varepsilon\Big)^{2H-1}\widetilde{V_3}(\varepsilon).
\end{equation}

Since
$$\Big(\log1/\varepsilon\Big)^{2H-1}\int_{(\log\frac1{\varepsilon})^{-1}}^tb^{2H-2}db<\infty,$$
\begin{equation}\label{sec3-eq3.11-}
\begin{split}
&\Big(\varepsilon^{-\frac1H}\Big)^{2H-1}\int_{[0,t\varepsilon^{-\frac1{2H}}]^2}\varepsilon^{\frac1{2H}+\frac2H-d-2}ac(a+c)\Big[(1+a^{2H})(1+c^{2H})\Big]^{-\frac{d}2-1}dadc\\
&\qquad+\Big(\varepsilon^{-\frac1H}\Big)^{2H-1}\int_{[0,t\varepsilon^{-\frac1{2H}}]^2}\varepsilon^{\frac2H-2+\frac2H-d-2} a^3c^3\Big[(1+a^{2H})(1+c^{2H})\Big]^{-\frac{d}2-3}dadc\\
&\to0,
\end{split}
\end{equation}
as $\varepsilon\to0$. Then, by L'H\^{o}spital's rule, we have
\begin{equation}\label{sec3-eq3.11}
\begin{split}
&\lim_{\varepsilon\to0}\Big(\varepsilon^{-\frac1H}\log1/\varepsilon\Big)^{2H-1}\widetilde{V_3}(\varepsilon)\\
&=H(1-2H)\frac{2d^2}{(2\pi)^d}\lim_{\varepsilon\to0}\Big(\log1/\varepsilon\Big)^{2H-1}\int_{(\log\frac1{\varepsilon})^{-1}}^t(t-b)b^{2H-2}db\\
&\qquad\times\lim_{\varepsilon\to0}\Big(\varepsilon^{-\frac1H}\Big)^{2H-1}\int_{[0,t\varepsilon^{-\frac1{2H}}]^2}ac\Big[(1+a^{2H})(1+c^{2H})\Big]^{-\frac{d}2-1}dadc\\
&=H(1-2H)\frac{2d^2}{(2\pi)^d}\times\frac{t}{1-2H}\times\frac{t^{2-4H}}{(1-2H)^2}.
\end{split}
\end{equation}

Together \eqref{sec3-eq3.7}, \eqref{sec3-eq3.8}, \eqref{sec3-eq V-V3} and \eqref{sec3-eq3.11}, we can see
$$
\lim_{\varepsilon\to0}\mathbb{E}\Big[\Big|\Big(\varepsilon^{-\frac1H}\log1/\varepsilon\Big)^{H-1/2}\widehat{\alpha}^{'}_{t,\varepsilon}(0)\Big|^2\Big]
=\frac{2Hd^2t^{3-4H}}{(2\pi)^d(1-2H)^2}=:\sigma^2.
$$
\end{proof}

\begin{lemma} \label{sec3-lem3.5}
For $I_1(f_{1,\varepsilon})$ given in \eqref{sec2-eq2.3}, then
$$
\lim_{\varepsilon\to0}\mathbb{E}\Big[\Big|\Big(\varepsilon^{-\frac1H}\log1/\varepsilon\Big)^{H-1/2}I_1(f_{1,\varepsilon})\Big|^2\Big]=\sigma^2.
$$
\end{lemma}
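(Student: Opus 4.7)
The plan is to mirror the three-region analysis of Lemma \ref{sec3-lem3.4}, but applied directly to $\mathbb{E}\big[|I_{1}(f_{1,\varepsilon})|^{2}\big]$ instead of to the full second moment of $\widehat{\alpha}'_{t,\varepsilon}(0)$. The key observation is that the first-chaos kernel differs from the integrand used in Lemma \ref{sec3-lem3.4} only by the replacement of the Gram determinant $|\varepsilon I+\Sigma|^{-d/2-1}=[(\varepsilon+\lambda)(\varepsilon+\rho)-\mu^{2}]^{-d/2-1}$ by its ``decoupled'' version $[(\varepsilon+\lambda)(\varepsilon+\rho)]^{-d/2-1}$, and that under the critical normalization the $\mu^{2}$-correction is asymptotically negligible.

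First, I would combine \eqref{sec2-eq2.3}, \eqref{sec2-eq2.ine}, and Lemma \ref{sec3-lem-chaosdec} with $q=1$ (which yields $\beta_{1,d}=-d/(2\pi)^{d/2}$) to write
$$
\mathbb{E}\Big[|I_{1}(f_{1,\varepsilon})|^{2}\Big]=\beta_{1,d}^{2}\int_{D\times D}(\varepsilon+\lambda)^{-d/2-1}(\varepsilon+\rho)^{-d/2-1}|\mu|\,dr\,dr'\,ds\,ds'.
$$
Next, I would split $D\times D$ into the three sub-regions $D_{1},D_{2},D_{3}$ of Lemma \ref{sec3-lem3.2} (absorbing the usual factor $2$ coming from the symmetry $(r,s)\leftrightarrow(r',s')$), obtaining a decomposition $\mathbb{E}[|I_{1}(f_{1,\varepsilon})|^{2}]=W_{1}(\varepsilon)+W_{2}(\varepsilon)+W_{3}(\varepsilon)$. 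Since $0\le\mu^{2}\le(\varepsilon+\lambda)(\varepsilon+\rho)$, each $W_{i}(\varepsilon)$ is bounded above by a constant multiple of the corresponding $V_{i}(\varepsilon)$ from Lemma \ref{sec3-lem3.4}, so the bounds \eqref{sec3-eq3.7} and \eqref{sec3-eq3.8} immediately imply
$$
\lim_{\varepsilon\to0}\Big(\varepsilon^{-\frac1H}\log1/\varepsilon\Big)^{2H-1}W_{i}(\varepsilon)=0,\qquad i=1,2.
$$

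For the dominant piece $W_{3}(\varepsilon)$, I plan to repeat the change of variables $(r,r',s,s')\mapsto(r,a,b,c)=(r,s-r,r'-s,s'-r')$ followed by the rescaling $a\mapsto\varepsilon^{-1/(2H)}a$ and $c\mapsto\varepsilon^{-1/(2H)}c$ carried out in Lemma \ref{sec3-lem3.4}. After this rescaling the integrand for $W_{3}$ coincides with that of $\widetilde{V_{3}}(\varepsilon)$, except that its denominator $[(1+a^{2H})(1+c^{2H})-\varepsilon^{-2}\mu(\cdot)^{2}]^{d/2+1}$ is now the simpler $[(1+a^{2H})(1+c^{2H})]^{d/2+1}$. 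The expansion
$$
\big[(1+a^{2H})(1+c^{2H})-\varepsilon^{-2}\mu^{2}\big]^{-d/2-1}=\big[(1+a^{2H})(1+c^{2H})\big]^{-d/2-1}+O\!\Big(\varepsilon^{\frac{2}{H}-2}a^{2}c^{2}\big[(1+a^{2H})(1+c^{2H})\big]^{-d/2-3}\Big),
$$
together with the vanishing estimate \eqref{sec3-eq3.11-}, shows that dropping the $-\varepsilon^{-2}\mu^{2}$ contribution introduces only an $o\big((\varepsilon^{-1/H}\log1/\varepsilon)^{1-2H}\big)$ error. The prefactor also matches, since $2\beta_{1,d}^{2}=2d^{2}/(2\pi)^{d}$ is precisely the coefficient appearing in $V_{3}$. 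Hence the same L'H\^{o}pital-rule calculation that produced \eqref{sec3-eq3.11} applies to $W_{3}(\varepsilon)$ and gives $\lim_{\varepsilon\to0}(\varepsilon^{-1/H}\log1/\varepsilon)^{2H-1}W_{3}(\varepsilon)=\sigma^{2}$, which combined with the two vanishing terms yields the lemma.

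The principal obstacle is verifying, in the rescaled Case (iii) regime $a,c\sim\varepsilon^{-1/(2H)}$ and $b\sim1$, that replacing the exact Gram determinant by its decoupled approximation does not alter the leading asymptotic; equivalently, that the higher-chaos contributions $I_{2q-1}(f_{2q-1,\varepsilon})$ with $q\ge 2$ are negligible under the normalization $(\varepsilon^{-1/H}\log1/\varepsilon)^{H-1/2}$. Both formulations are essentially settled by \eqref{sec3-eq3.11-}, so once the bookkeeping of constants is in place, no genuinely new estimate should be needed.
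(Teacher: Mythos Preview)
Your proposal is correct and follows essentially the same approach as the paper: both split $\mathbb{E}[|I_1(f_{1,\varepsilon})|^2]$ over the three regions $D_1,D_2,D_3$, use the pointwise domination $W_i\le V_i$ (the paper writes $0\le V_i^{(1)}\le V_i$) together with \eqref{sec3-eq3.7}--\eqref{sec3-eq3.8} to kill the first two pieces, and for $D_3$ feed the decoupled denominator $[(1+a^{2H})(1+c^{2H})]^{-d/2-1}$ into the same rescaling and L'H\^{o}pital computation \eqref{sec3-eq3.11-}--\eqref{sec3-eq3.11}, noting $2\beta_{1,d}^2=2d^2/(2\pi)^d$. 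The only cosmetic difference is that the paper re-expands $\mu$ directly inside $V_3^{(1)}$ and, as in \eqref{sec3-eq V-V3}, first excises the strip $b\in[0,(\log 1/\varepsilon)^{-1}]$ before applying the expansion; you should make that excision explicit too (it follows immediately from $W_3\le V_3$ and the bound preceding \eqref{sec3-eq V-V3}), since the Taylor expansion of the denominator is only uniform for $b$ bounded away from $0$.
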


\begin{proof}
Form \eqref{sec2-eq2.3}, we can find
\begin{equation} \label{sec3-eq3.lem32}
\mathbb{E}\Big[\Big|I_1(f_{1,\varepsilon})\Big|^2\Big]=\Big(V_1^{(1)}(\varepsilon)+V_2^{(1)}(\varepsilon)+V_3^{(1)}(\varepsilon)\Big),
\end{equation}
where $V_i^{(1)}(\varepsilon)=2\int_{D_i}\langle f_{1,\varepsilon,s_1,r_1},f_{1,\varepsilon,s_2,r_2}\rangle_{\mathfrak{H}}dr_1dr_2ds_1ds_2$
for $i=1, 2, 3$, and $\langle f_{1,\varepsilon,s_1,r_1},f_{1,\varepsilon,s_2,r_2}\rangle_{\mathfrak{H}}$ was defined in \eqref{sec2-eq2.ine}. Then we have
\begin{equation} \label{sec3-eq3.15}
0\leq V_i^{(1)}(\varepsilon)\leq V_i(\varepsilon).
\end{equation}

Combining  \eqref{sec3-eq3.15} with \eqref{sec3-eq3.7} and \eqref{sec3-eq3.8}, we can see
$$
\lim_{\varepsilon\to0}\Big(\varepsilon^{-\frac1H}\log1/\varepsilon\Big)^{2H-1}\Big(V_1^{(1)}(\varepsilon)+V_2^{(1)}(\varepsilon)\Big)=0.
$$
Thus, we only need to consider $\Big(\varepsilon^{-\frac1H}\log1/\varepsilon\Big)^{2H-1}V_3^{(1)}(\varepsilon)$ as $\varepsilon\to0$.

By \eqref{sec2-eq2.3} and \eqref{sec3-eq3.lem32} we have
\begin{align*}
V_3^{(1)}(\varepsilon)&=2\beta_{1,d}^2\int_{S_3}G^{(1)}_{\varepsilon,r'-r}(s-r,s'-r')\\
&=2\beta_{1,d}^2\int_{[0,t]^3}\int_0^{t-(a+b+c)}\mathds{1}_{(0,t)}(a+b+c)(\varepsilon+a^{2H})^{-d/2-1}(\varepsilon+c^{2H})^{-d/2-1}\mu(a+b,a,c)ds_1dadbdc\\
&=2H(1-2H)\beta_{1,d}^2\int_0^t\int_{[0,t\varepsilon^{-\frac1{2H}}]^2}\int_{[0,1]^2}\mathds{1}_{(0,t)}\Big((b+\varepsilon^{\frac1{2H}}(a+c)\Big)\Big(t-b-\varepsilon^{\frac1{2H}}(a+c)\Big)\\
&\qquad\qquad\qquad \times \Big[(1+a^{2H})(1+c^{2H})\Big]^{-d/2-1}ac\Big(b+\varepsilon^{\frac1{2H}}(av_1+cv_2)\Big)^{2H-2}dv_1dv_2dadcdb.
\end{align*}
Note that
\begin{align*}
\int_{[0,1]^2}\Big(b+\varepsilon^{\frac1{2H}}(av_1+cv_2)\Big)^{2H-2}dv_1dv_2=b^{2H-2}+O(\varepsilon^{\frac1{2H}}(a+c))
\end{align*}
and
\begin{align*}
\int_{[0,1]^2}&\Big(t-b-\varepsilon^{\frac1{2H}}(a+c)\Big) \Big[(1+a^{2H})(1+c^{2H})\Big]^{-d/2-1}ac\Big(b+\varepsilon^{\frac1{2H}}(av_1+cv_2)\Big)^{2H-2}dv_1dv_2\\
&=(t-b)b^{2H-2}ac\Big[(1+a^{2H})(1+c^{2H})\Big]^{-d/2-1}+O\left(\varepsilon^{\frac1{2H}}(a+c)ac\Big[(1+a^{2H})(1+c^{2H})\Big]^{-d/2-1}\right).
\end{align*}

Similar to \eqref{sec3-eq V-V3},
$$
\lim_{\varepsilon\to0}\Big(\varepsilon^{-\frac1H}\log1/\varepsilon\Big)^{2H-1}V^{(1)}_3(\varepsilon)=\lim_{\varepsilon\to0}\Big(\varepsilon^{-\frac1H}\log1/\varepsilon\Big)^{2H-1}\widetilde{V}^{(1)}_3(\varepsilon),
$$
where
\begin{align*}
|\widetilde{V}_3^{(1)}(\varepsilon)|
&=2H(1-2H)\beta_{1,d}^2\int_{(\log\frac1\varepsilon)^{-1}}^t\int_{[0,t\varepsilon^{-\frac1{2H}}]^2}\int_{[0,1]^2}\mathds{1}_{(0,t)}\Big((b+\varepsilon^{\frac1{2H}}(a+c)\Big)\Big(t-b-\varepsilon^{\frac1{2H}}(a+c)\Big)\\
&\qquad\qquad\qquad \times \Big[(1+a^{2H})(1+c^{2H})\Big]^{-d/2-1}ac\Big(b+\varepsilon^{\frac1{2H}}(av_1+cv_2)\Big)^{2H-2}dv_1dv_2dadcdb.
\end{align*}

According to \eqref{sec3-eq3.11-} and \eqref{sec3-eq3.11}, we can find that
\begin{align*}
&\lim_{\varepsilon\to0}\Big(\varepsilon^{-\frac1H}\log1/\varepsilon\Big)^{2H-1}\widetilde{V}^{(1)}_3(\varepsilon)\\
&=2H(1-2H)\beta^2_{1,d}\lim_{\varepsilon\to0}\Big(\log1/\varepsilon\Big)^{2H-1}\int_{(\log\frac1{\varepsilon})^{-1}}^t(t-b)b^{2H-2}db\\
&\qquad\times\lim_{\varepsilon\to0}\Big(\varepsilon^{-\frac1H}\Big)^{2H-1}\int_{[0,t\varepsilon^{-\frac1{2H}}]^2}ac\Big[(1+a^{2H})(1+c^{2H})\Big]^{-\frac{d}2-1}dadc\\
&=H(1-2H)\frac{2d^2}{(2\pi)^d}\times\frac{t}{1-2H}\times\frac{t^{2-4H}}{(1-2H)^2}=\sigma^2,
\end{align*}
where we use $\beta_{1,d}^2=\frac{d^2}{(2\pi)^d}$ in the second equality.

Thus,
\begin{align*}
\lim_{\varepsilon\to0}\Big(\varepsilon^{-\frac1H}\log1/\varepsilon\Big)^{2H-1}V^{(1)}_3(\varepsilon)=\sigma^2.
\end{align*}
\end{proof}

\textbf{Proof of  Theorem  \ref{sec1-th.d=3-2}}

By Lemmas \ref{sec3-lem3.4}--\ref{sec3-lem3.5} and
$$\widehat{\alpha}^{'}_{t,\varepsilon}(0)=I_1(f_{1,\varepsilon})+\sum_{q=2}^\infty I_{2q-1}(f_{2q-1,\varepsilon}),$$
we can see
$$\lim_{\varepsilon\to0}\mathbb{E}\Big[\Big|\Big(\varepsilon^{-\frac1H}\log1/\varepsilon\Big)^{H-1/2}\sum_{q=2}^\infty I_{2q-1}(f_{2q-1,\varepsilon})\Big|^2\Big]=0.$$

Since $I_1(f_{1,\varepsilon})$ is Gaussian, then we have,  as $\varepsilon\to0$,
$$
\Big(\varepsilon^{-\frac1H}\log1/\varepsilon\Big)^{H-1/2}I_1(f_{1,\varepsilon})\overset{law}{\to}N(0,\sigma^2).
$$
Thus,
$$
\Big(\varepsilon^{-\frac1H}\log1/\varepsilon\Big)^{H-1/2}\widehat{\alpha}^{'}_{t,\varepsilon}(0)\overset{law}{\to}N(0,\sigma^2),
$$
as $\varepsilon\to0$. This completes the proof.

\bigskip



\textbf{Declaration of interests} ~The authors declare that they have no known competing financial interests or personal relationships that
could have appeared to influence the work reported in this paper.

\bigskip

\end{document}